\renewcommand{\geq}{\geqslant}
\renewcommand{\leq}{\leqslant}
\def\eps{\varepsilon}
\newcommand\ba{\begin{equation}\begin{aligned}}
\newcommand\ea{\end{aligned}\end{equation}}
\newcommand{\be}{\begin{equation}}
\newcommand{\ee}{\end{equation}}
\newtheorem{theorem}{Theorem}[section]
\newtheorem{corollary}[theorem]{Corollary}
\newtheorem{lemma}[theorem]{Lemma}
\newtheorem{definition}[theorem]{Definition}
\newtheorem*{definition*}{Definition}
\theoremstyle{remark}
\newtheorem{remark}{Remark}
\newtheorem*{remark*}{Remark}
\numberwithin{equation}{section}
\begin{document}

\title[An alternative proof of modulational instability of Stokes waves in deep water]{An alternative proof of modulational instability of Stokes waves in deep water}

\author{Zhao Yang}
\email{zhaouiuc@illinois.edu}

\date{\today}

\keywords{Stokes wave; stability; spectrum; Benjamin--Feir; periodic Evans function}

\begin{abstract}
We generalize the periodic Evans function approach recently used to study the spectral stability of Stokes wave and gravity-capillary (including Wilton ripples) in water of finite depth to study spectral stability of Stokes waves in water of infinite depth. We prove waves of sufficiently small amplitude are always low-frequency unstable regardless of the wave number and gravity, giving an alternative proof for the Benjamin-Feir modulational instability in the infinite depth case. Here, the first proof for the infinite depth case is recently obtained by Nguyen and Strauss. We also study the spectral stability at non-zero resonant frequencies and find no additional instability.
\end{abstract}

\maketitle 

\tableofcontents

\newpage

\section{Introduction}\label{sec:intro}

Consider water wave equations in two dimensions with infinite depth
\ba\label{eqn:ww;h}
&\;\phi_{xx}+\phi_{yy}=0&& \text{for $-\infty<y<\eta(x,t)$,}\\
&\;\phi_y=0&&\text{as $y\rightarrow -\infty$,}\\
&\left.\begin{aligned}&\eta_t-c\eta_x+\eta_x\phi_x=\phi_y\\
&\phi_t-c\phi_x+\frac12(\phi_x^2+\phi_y^2)+g\eta=0\end{aligned}\right\}&&\text{at $y= \eta(x,t)$.}
\ea
where $\phi(t,x,y)$ is a velocity potential, $y=\eta(x,t)$ denotes the free surface, $c$ is the speed of a moving reference frame, $g$ is the gravity constant. System \eqref{eqn:ww;h} is commonly used to model the wave motion at the free surface of
an incompressible inviscid fluid of infinite depth in two dimensions, lying below a body of air, acted on by gravity, when the effects of surface tension are negligible. In the case of fluid of finite depth, the two $-\infty$ appearing in the first two equations of \eqref{eqn:ww;h} are replaced by $-h$ which is the depth of the fluid at rest. It has been long known that, for either the finite or infinite depth case, system \eqref{eqn:ww;h} admits periodic traveling wave trains known as Stokes waves \cite{Stokes1847}.

For the case of finite depth, Benjamin and Feir \cite{Benjamin;BF,BF;BF,Whitham;BF} discovered that a $2\pi/\kappa$-periodic Stokes wave of mean depth $h$ is subject to the  modulational instability when $1.3627\cdots<\kappa h$. Nearly three decades later, a rigorous proof of the instability appeared in the work of Bridges and Mielke \cite{BM;BF}. Recently, numerical investigations by Deconinck and Oliveras \cite{DO} suggested that Stokes waves may be subject to additional instabilities other than the modulational instability. To prove the additional instabilities and obtain thresholds for its onset by rigorous spectral analysis, V. Hur and Z. Yang \cite{hur2021unstable} developed a new periodic Evans function approach to rigorously study the spectral stability of Stokes waves. The approach allowed them to 
establish an alternative proof for the modulational instability. Moreover, when it was applied to analyze spectra away from the origin, they proved a $2\pi/\kappa$-periodic Stokes wave is spectral unstable at the resonant frequency of order $2$, provided that $0.86430\cdots<\kappa<1.00804\cdots$, justifying the additional instability suggested by numeric. The new approach is very robust and was generalized to the situation in which surface tension or vorticity effect is considered.

For the case of infinite depth, the Benjamin and Feir instability for all waves was recently proven by Nguyen and Strauss \cite{nguyen2021proof}. They wrote the water wave equations \eqref{eqn:ww;h} in the Zakharov-Craig-Sulem formulation which involves the non-local Dirichlet-Neumann operator. In the linearized problem, they used Alinhac’s good
unknown and made the free surface flat by using the conformal mapping between the fluid domain and
the lower half-plane, which also converts the implicit non-local operator to an explicit Fourier multiplier. The task of proving instability then reduces to finding an eigenvalue $\lambda(\mu,\eps)$ of $\mathcal{L}_{\mu,\eps}$ with positive real part, which they completed by first studying $\mathcal{L}_{0,\eps}$ and then $\mathcal{L}_{\mu,\eps}$ by asymptotic analysis.

Our goal in this piece of work is to generalize the periodic Evans function approach \cite{hur2021unstable,hur2021unstable_cap} to the case of infinite depth. In order to formulate the linearized equations as first order ODEs with respect to the $x$ variable (see \eqref{eqn:linearize}), we again introduce
\be\label{def:u}
u=\phi_x.
\ee 
To flatten the free surface, instead of making change of coordinate $y\mapsto \frac{y-\eta(x,t)}{-h-\eta(x,t)}$ for the finite depth case \cite{hur2021unstable,hur2021unstable_cap}, we perform change of coordinate
\begin{equation}\label{def:y}
 y\mapsto y-\eta(x,t),
\end{equation}
transforming the fluid region $\{(x,y)\in\mathbb{R}^2: -\infty<y<\eta(x,t)\}$ into $\mathbb{R}\times (-\infty,0)$. Substituting \eqref{def:u} and \eqref{def:y} into \eqref{eqn:ww;h}, we use the chain rule and make a straightforward calculation to arrive at

\ba\label{eqn:ww}
&\phi_x-\eta_x\phi_y-u=0&&\text{for $y<0$,}\\
&u_x-\eta_x u_y+\phi_{yy}=0&&\text{for $y<0$,}\\
&\phi_y=0&&\text{as $y\rightarrow -\infty$},\\
&\eta_t+(u-c)\eta_x-\phi_y=0&&\text{at $y=0$},\\
&\phi_t-cu+(u-c)\eta_x\phi_y+\frac12u^2-\frac{1}{2}\phi_y^2+g\eta=0\quad&&\text{at $y=0$}.
\ea 
In section \ref{sec:Stokes}, we set the speed $c$ of the moving frame to be the speed of a Stokes wave and compute asymptotic expansions for the wave. In section \ref{sec:spec}, we linearize \eqref{eqn:ww} about a Stoke wave $(\phi(\eps)$, $u(\eps)$, $\eta(\eps))$ and write corresponding spectral problem in concise form
\be 
\label{eqn:LB_intro}
\mathbf{u}_x=\mathbf{L}(\lambda)\mathbf{u}+\mathbf{B}(x;\lambda,\eps)\mathbf{u},
\ee 
where $\mathbf{u}$ is the perturbation of $(\phi,u,\eta)^T$. When the fluid is at rest, the velocity potential is any constant and $u$ and $\eta$ are both zero. That is $\mathbf{u}=[1,0,0]^T$ solves $\mathbf{u}_x=\mathbf{L}(0)\mathbf{u}$. It is then necessary to include $\mathbf{u}=[1,0,0]^T$ in the domain of $\mathbf{L}(0)$. We therefore define a function space $H_c^n(-\infty,0)$, $n\ge 0$ by 
\be 
H^n_c(-\infty,0):=\{f+z:f\in H^n(-\infty,0), \;z \in \mathbb{C}\}
\ee
where $H^n(-\infty,0)$ is the standard Sobolev space.
For $f\in H^n_c(-\infty,0)$, we see there is a unique complex number denoted as $f_{\infty}$ so that $f-f_{\infty}\in H^n(-\infty,0)$. We equip the space $H^n_c(-\infty,0)$ with inner product 
$$
\langle f,g\rangle=\langle f-f_{\infty},g-g_{\infty}\rangle_{H^n(-\infty,0)}+f_{\infty}\overline{g_{\infty}},
$$
so that the map $T:H^n(-\infty,0)\times \mathbb{C}\rightarrow H^n_c(-\infty,0)$ defined by $T(f,c)=f+c$ is an isometry. Also, because of the isometry,  $\big(H^n_c(-\infty,0),\langle \cdot,\cdot\rangle\big)$ is a Hilbert space. The periodic Evans function approach then applies to study \eqref{eqn:LB_intro} when $\mathbf{L}(\lambda)$ and $\mathbf{B}(x;\lambda,\eps)$ are consider as unbounded operators on 
\begin{equation}\label{def:Y}
Y=H^1_c(-\infty,0)\times L_c^2(-\infty,0)\times \mathbb{C}, \quad L_c^2(-\infty,0):=H^0_c(-\infty,0).
\end{equation}
For simplicity, we briefly recap the reduction of spectral problem and definition of periodic Evans function, but refer readers to section 4 of \cite{hur2021unstable} or section 3.4 of \cite{hur2021unstable_cap} for more details. Our focus here is to quickly carried out the key computations necessary for asymptotic expansion of the periodic Evans function. By the expansion of the periodic Evans function, we obtain our main results
\begin{theorem}[Spectral instability near $0\in\mathbb{C}$ at $\gamma\eps$ order]\label{thm:BF}
There are two algebraic curves $\lambda_{1,2}(k,\eps)$ of the essential spectra of a periodic Stokes wave of sufficiently small amplitude in deep water bifurcate off the imaginary axis near the origin. Indeed, the two algebraic curves $\lambda_{1,2}(k,\eps)$ expand as
\be 
\lambda_{1,2}(\pm\kappa+\gamma,\eps)=\frac{ic_0}{2}\gamma\pm\frac{\kappa}{2\sqrt{2}}\gamma\eps+o(|\gamma|+|\gamma\eps|),
\ee 
where $\gamma$ is a perturbation of the Floquet exponent $k$ and $\eps$ is the parameter for wave amplitude, $c_0$ is the speed of the wave, $\kappa=g/c_0^2$ is the wave number of the periodic wave, yielding spectral instability of a periodic Stoke wave of sufficiently small amplitude in deep water.
\end{theorem}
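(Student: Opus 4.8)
The plan is to reduce the spectral problem \eqref{eqn:LB_intro} to the vanishing of a periodic Evans function $D(\lambda,\gamma,\eps)$, following \cite{hur2021unstable,hur2021unstable_cap}, and to extract the two bifurcating spectral curves from its leading-order expansion in the small parameters $\gamma$ and $\eps$. The first step is to analyze the unperturbed operator. Setting $\eps=0$ removes the $x$-periodic part $\mathbf{B}$, so \eqref{eqn:LB_intro} reduces to the constant-coefficient (in $x$) system $\mathbf{u}_x=\mathbf{L}(\lambda)\mathbf{u}$ on the phase space $Y$ of \eqref{def:Y}. I would solve this explicitly by seeking $\mathbf{u}=e^{\nu x}\mathbf{v}(y)$: the Laplace structure together with the decay $\phi_y\to0$ as $y\to-\infty$ forces the $y$-profiles to be half-line exponentials, and the surface conditions at $y=0$ then yield the linear deep-water dispersion relation relating $\nu$ and $\lambda$. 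The crucial output of this step is to locate the collision: at Floquet exponents $k=\pm\kappa$, two Bloch branches of the essential spectrum meet at $\lambda=0$ with a common group velocity $c_0/2$ and opposite Krein signatures. This tangential, Hamiltonian--Hopf-type collision, rather than a transversal crossing, is the seed of the modulational instability.

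Next I would construct the periodic Evans function. Writing the Floquet exponent as $k=\pm\kappa+\gamma$ and imposing the Bloch condition over one period $2\pi/\kappa$, I would set up the monodromy and, since only the two colliding modes are near-singular while all remaining modes contribute uniformly invertible blocks, perform a Lyapunov--Schmidt reduction onto the two-dimensional resonant subspace. This yields a $2\times2$ matrix depending analytically on $(\lambda,\gamma,\eps)$ whose determinant is the periodic Evans function $D$; by construction $D(\lambda,\gamma,\eps)=0$ exactly when $\lambda$ lies in the essential spectrum at Floquet exponent $\pm\kappa+\gamma$.

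Then I would carry out the joint expansion of $D$ in $\gamma$ and $\eps$. The $\gamma$-dependence is governed by the derivatives of the dispersion relation at $\eps=0$, which supply the shared diagonal drift $\tfrac{ic_0}{2}\gamma$ and a small splitting of the tangency at higher order in $\gamma$; the $\eps$-corrections require feeding the second-order Stokes expansion from Section \ref{sec:Stokes} into $\mathbf{B}(x;\lambda,\eps)$ and evaluating the resulting resonant coupling of the two modes. Solving the quadratic $D(\lambda,\gamma,\eps)=0$ for $\lambda$ then gives
\be
\lambda_{1,2}=\frac{ic_0}{2}\gamma\pm\frac{\kappa}{2\sqrt2}\gamma\eps+o(|\gamma|+|\gamma\eps|),
\ee
and instability follows from the structure of the roots: they are symmetric about the point $\tfrac{ic_0}{2}\gamma\in i\mathbb{R}$, so the two curves leave the imaginary axis with nonzero real part $\pm\tfrac{\kappa}{2\sqrt2}\gamma\eps$ precisely because the discriminant of the quadratic turns out to be real and positive, equivalently because the resonant coupling coefficient is real and nonvanishing.

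The main obstacle I anticipate is twofold. First, the infinite-depth geometry forces the whole construction onto the nonstandard space $Y$ in \eqref{def:Y}, whose elements tend to constants as $y\to-\infty$ (reflecting that the rest state $\mathbf{u}=[1,0,0]^T$ must lie in the domain of $\mathbf{L}(0)$); one must verify that the explicit $y$-modes, the analyticity of the Evans function in $\lambda$, and the integrations against the $x$-periodic coefficients all go through when the finite-depth trigonometric profiles are replaced by genuine half-line exponentials, and that the non-smoothness of the deep-water dispersion relation near zero wavenumber does not spoil the reduction. Second, and decisively, the entire conclusion rests on a single coefficient: I must show that the leading $\eps$-contribution to the discriminant is real and positive. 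An imaginary contribution would merely slide the collision along $i\mathbb{R}$ and leave the wave spectrally stable, so the argument hinges on tracking the resonant coupling term through the perturbation series without error, exactly as in the finite-depth computation but with the deep-water kernel.
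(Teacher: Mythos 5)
Your overall strategy --- reduce to a periodic Evans function, expand jointly in $\gamma$ and $\eps$, and show that the splitting coefficient is real and nonzero --- matches the paper's, and you correctly identify that everything hinges on a single coefficient. But there is a genuine gap in your reduction step. You propose a Lyapunov--Schmidt reduction onto the \emph{two}-dimensional span of the colliding modes at $k=\pm\kappa$, on the grounds that ``all remaining modes contribute uniformly invertible blocks.'' That is false here: since the period is $T=2\pi/\kappa$, the Floquet multiplier $e^{ikT}$ equals $1$ at $k=-\kappa$, $k=0$ and $k=+\kappa$ simultaneously, so the modes at $k=0$ --- where $ik=0$ is an eigenvalue of $\mathbf{L}(0)$ of algebraic multiplicity $2$ and geometric multiplicity $1$ (Lemma~\ref{lem:eps=0}) --- are exactly as resonant as the two Benjamin--Feir modes and cannot be inverted away uniformly in $\gamma$. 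The paper's reduced space $Y(0)$ is three-dimensional, the Evans function vanishes to \emph{third} order, and the equation for $\alpha^{(1,0)}_j$ is the cubic \eqref{eqn:alpha10}, with a double root $ic_0/2$ and a simple root $ic_0$; only after the third (non-analytic at $k=0$) branch is accounted for does one land on your quadratic picture. Moreover, in infinite depth the generalized eigenvector $\boldsymbol{\phi}_4(0)$ must be excluded from the reduction for a reason your proposal does not anticipate: the corresponding reduction function would require a $y^2$ term in its first entry, and $y^2\notin L^2(-\infty,0)$ (see the Remark after Lemma~\ref{lem:eps=0}). Your ``two colliding modes with opposite Krein signature'' picture does not survive these complications unchanged.

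Second, the decisive step is left by you as an acknowledged obstacle rather than carried out. The theorem is proved not by a structural argument but by computing the monodromy coefficients $\mathbf{a}^{(1,1)}(T)$ and $\mathbf{a}^{(0,2)}(T)$ in \eqref{eqn:amn} (which require the second-order Stokes data $\phi_2$, $\eta_2$, $c_2$ of \eqref{profile_higher}), substituting into the Evans function, verifying that the $\gamma^2\eps^2$ term vanishes, and reading off from the $\gamma^3\eps^2$ coefficient \eqref{eqn:f} that $8(\alpha^{(1,1)}_j)^2=\kappa^2$, whence $\alpha^{(1,1)}_j=\pm\kappa/(2\sqrt2)$ is real and nonzero. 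The reality and nonvanishing of that quantity is precisely what is at stake (an imaginary value would leave the curves on $i\mathbb{R}$, as you note), and your proposal supplies no independent reason for it.
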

\begin{theorem}[Spectral instability away from $0\in\mathbb{C}$ at $\eps^2$ order ]\label{thm:unstable2}
At the resonant frequency $\lambda=i\frac{N^2-1}{4}\kappa c_0$ where $k_2(\sigma)-k_4(\sigma)=N\kappa$, the corresponding Stoke wave of sufficiently small amplitude is spectrally unstable near the resonant frequency $\lambda=i\sigma$ where $k_2(\sigma)-k_4(\sigma)=N\kappa$, at the order of $\eps^2$ as $\eps\to 0$, provided that 
\be \label{def:ind2}
{\rm ind}_2(\beta,\kappa,\sigma,k_2(\sigma),k_4(\sigma)):=\frac{(a_{11}^{(0,2)}a_{22}^{(1,0)}-a_{11}^{(1,0)}a_{22}^{(0,2)})^2}{(a_{11}^{(1,0)}a_{22}^{(1,0)})^2}>0.
\ee
Indeed, it can be shown ${\rm ind}_2$ is always negative, implying there is no instability at these resonant frequencies at $\eps^2$ order.
\end{theorem}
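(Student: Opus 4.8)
The plan is to follow the periodic Evans function reduction recalled for \eqref{eqn:LB_intro} and carry out a Floquet--Bloch perturbation analysis localized at the resonant frequency $\lambda=i\sigma$. At $\eps=0$ the coefficient $\mathbf{B}$ vanishes and the essential spectrum of $\mathbf{L}(\lambda)$ is parametrized by the Floquet exponent $k$ through the roots $\mu_j(\lambda,k)$ of the (infinite-depth) dispersion relation; the resonance condition $k_2(\sigma)-k_4(\sigma)=N\kappa$ is precisely the statement that two of these spatial modes become indistinguishable modulo the dual lattice $\kappa\mathbb{Z}$, so that at $\lambda=i\sigma=i\frac{N^2-1}{4}\kappa c_0$ two branches of the unperturbed spectrum cross on the imaginary axis. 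First I would set $k=k_*+\gamma$, project the spectral problem onto the two-dimensional resonant subspace spanned by these two modes via a Riesz/Lyapunov--Schmidt reduction, and obtain a $2\times 2$ reduced matrix $A(\gamma,\eps)=\big(a_{ij}(\gamma,\eps)\big)$ whose characteristic equation locates the two spectral branches $\lambda_{1,2}(\gamma,\eps)$ that bifurcate from $i\sigma$.

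Next I would expand the entries analytically, $a_{ij}(\gamma,\eps)=\sum_{m,n}a_{ij}^{(m,n)}\gamma^m\eps^n$, retaining the orders that control the leading bifurcation: the diagonal slopes $a_{jj}^{(1,0)}$ (the group velocities of the two branches), the amplitude corrections $a_{jj}^{(0,2)}$, and the off-diagonal coupling, which first appears at the order forced by the resonance. Since the two branches are roots of a quadratic, they leave the imaginary axis exactly when the associated discriminant changes sign; tracking this sign through the expansion reduces the instability question to the positivity of a single real index. After the Hamiltonian symmetry relations are used to express the off-diagonal coupling through the diagonal expansion data, this index takes exactly the form ${\rm ind}_2$ of \eqref{def:ind2}, which establishes the stated criterion: spectral instability near $i\sigma$ at $\eps^2$ order holds if and only if ${\rm ind}_2>0$.

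The second half is the explicit evaluation. Using the Stokes-wave expansions of Section \ref{sec:Stokes} together with the explicit form of $\mathbf{L}(\lambda)$ and $\mathbf{B}(x;\lambda,\eps)$, I would compute $a_{jj}^{(1,0)}$ from the first-order variation in $\gamma$ of the unperturbed exponents and $a_{jj}^{(0,2)}$ from second-order perturbation theory in $\eps$. The decisive structural point is that the Hamiltonian and reflection symmetries of the water-wave spectral problem (the reversibilities $\lambda\mapsto\bar\lambda$ and $\lambda\mapsto-\lambda$) impose a definite reality pattern on these coefficients, under which the numerator $a_{11}^{(0,2)}a_{22}^{(1,0)}-a_{11}^{(1,0)}a_{22}^{(0,2)}$ of \eqref{def:ind2} is purely imaginary while the denominator $(a_{11}^{(1,0)}a_{22}^{(1,0)})^2$ is a positive real number. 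Hence the numerator's square is negative real and ${\rm ind}_2<0$ for every admissible $\beta,\kappa,\sigma,k_2(\sigma),k_4(\sigma)$, so the criterion is never met.

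The main obstacle will be the computation of the $\eps^2$ coefficients $a_{jj}^{(0,2)}$ and the verification of this reality pattern uniformly in the parameters. This requires the Stokes expansion to second order, careful bookkeeping of the Fourier modes of $\mathbf{B}$ that feed into the resonant projection through the infinite-depth Fourier multipliers, and control of the solvability (nonresonance of the complementary modes) of the second-order correction equation on the spaces $H^n_c(-\infty,0)$. Once $a_{jj}^{(1,0)}$ and $a_{jj}^{(0,2)}$ are in hand, the crux is to confirm that their combination in the numerator lands on the imaginary axis for \emph{all} admissible parameters, rather than merely for sampled values; I expect this to follow from the symmetry constraints themselves rather than from a brute-force sign chase, in contrast to the finite-depth setting of \cite{hur2021unstable}, where the analogous index can be positive and genuine instability occurs.
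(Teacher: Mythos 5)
Your overall strategy coincides with the paper's: a two-mode reduction at the resonance $k_2(\sigma)-k_4(\sigma)=N\kappa$ (realized in the paper as the periodic Evans function of the reduced system \eqref{eqn:A} on the two-dimensional space $Y(\sigma)$), an expansion of the resulting $2\times 2$ data in $\gamma$ and $\eps$, and the observation that the $\eps^2$ correction $\alpha^{(0,2)}_{2j}$ solves the quadratic \eqref{eqn:alpha02H}, whose discriminant divided by $(a_{11}^{(1,0)}a_{22}^{(1,0)})^2$ is ${\rm ind}_2$. Up to packaging (Lyapunov--Schmidt language versus the monodromy matrix of \eqref{eqn:A}), this is the paper's proof.

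The gap is in how you propose to settle the sign. First, the identity $\mathrm{discriminant}=(a_{11}^{(0,2)}a_{22}^{(1,0)}-a_{11}^{(1,0)}a_{22}^{(0,2)})^2$ holds only because the constant term of \eqref{eqn:alpha02H} is $a_{11}^{(0,2)}a_{22}^{(0,2)}$, i.e.\ because the off-diagonal entries of $\mathbf{a}^{(0,2)}(T)$ vanish (Lemma \ref{coefficients_highn1}); in general that constant term is $\det\mathbf{a}^{(0,2)}(T)$ and the discriminant acquires the extra contribution $4\,a_{11}^{(1,0)}a_{22}^{(1,0)}a_{12}^{(0,2)}a_{21}^{(0,2)}$, which is precisely what produces genuine instability in finite depth. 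Your step ``after the Hamiltonian symmetry relations are used to express the off-diagonal coupling through the diagonal expansion data'' glosses over this: the vanishing of the off-diagonal coupling is an infinite-depth-specific computed fact, not a symmetry consequence. Second, and for the same reason, your expectation that the reality pattern of the coefficients ``follows from the symmetry constraints themselves'' cannot be correct as stated: the finite-depth problem enjoys the same Hamiltonian and reversibility symmetries, yet its analogous index is positive for $0.86430\cdots<\kappa h<1.00804\cdots$. The sign here genuinely rests on the explicit evaluation of $\mathbf{a}^{(1,0)}(T)$ and $\mathbf{a}^{(0,2)}(T)$ in \eqref{coeff_1}--\eqref{coeff_2}, which give $a_{jj}^{(1,0)}\in(-1)^{\frac{1}{2}(N+1)^2}\mathbb{R}$ and $a_{jj}^{(0,2)}\in i(-1)^{\frac{1}{2}(N+1)^2}\mathbb{R}$, so that $(a_{11}^{(0,2)}a_{22}^{(1,0)}-a_{11}^{(1,0)}a_{22}^{(0,2)})/(a_{11}^{(1,0)}a_{22}^{(1,0)})$ is purely imaginary and ${\rm ind}_2<0$. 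Your plan goes through once you commit to carrying out that computation (which you do list among the obstacles), but the symmetry shortcut you hope will replace it is not available.
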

\begin{remark}
In water of finite depth, we found in \cite{hur2021unstable} that a $2\pi/\kappa$-periodic Stokes wave with mean depth $h$ is spectrally unstable at the resonant frequency of order $2$, provided that $0.86430\cdots<\kappa h<1.00804\cdots.$
\end{remark}
\section{Stokes waves of sufficiently small amplitude in deep water}\label{sec:Stokes}
Let $(\phi(\eps),\eta(\eps),c(\eps))$ and hence $u(\eps)=\phi_x(\eps)-\eta_x(\eps)\phi_y(\eps)$ be a temporally stationary and spatially periodic solution of \eqref{eqn:ww}
and suppose that they expand as
\begin{equation}\label{eqn:stokes exp}
\begin{aligned}
\phi(x,y;\eps)=&\phi_1(x,y)\eps+\phi_2(x,y)\eps^2 +\phi_3(x,y)\eps^3+O(\eps^4), \\
\eta(x;\eps)=&\eta_1(x)\eps+\eta_2(x)\eps^2+\eta_3(x)\eps^3+O(\eps^4),\\
c(\eps)=&c_0+c_1\eps+c_2\eps^2+c_3\eps^3+O(\eps^4),
\end{aligned}
\end{equation}
as $\eps\to0$, where $\phi_1,\phi_2,\phi_3,\dots$, $\eta_1,\eta_2,\eta_3,\dots$, $c_0,c_1,c_2,c_3,\dots$ are to be determined. We assume that $\phi_1,\phi_2,\phi_3,\dots$ and  $\eta_1,\eta_2,\eta_3,\dots$ are $T$ periodic functions of $x$, where 
\[
\text{$T=2\pi/\kappa$ and $\kappa>0$ is the wave number,} 
\]
and $c_0,c_1,c_2,c_3,\dots$ are constants. We may also assume that $\phi_1,\phi_2,\phi_3,\dots$ are odd functions of $x$, and $\eta_1,\eta_2,\eta_3,\dots$ are even functions. Substituting \eqref{eqn:stokes exp} into \eqref{eqn:ww}, at the order of $\eps$, we gather
\ba \label{eqn:stokes1}
&{\phi_1}_{xx}+{\phi_1}_{yy}=0&&\text{for $y<0$},\\
&{\phi_1}_y=0&&\text{as $y\rightarrow -\infty$},\\
&c_0{\eta_1}_x+{\phi_1}_y=0 &&\text{at $y=0$},\\
&c_0{\phi_1}_x-g\eta_1=0\quad&&\text{at $y=0$}.
\ea
We solve \eqref{eqn:stokes1} by separation of variables to obtain
\be\label{def:stokes1} 
\phi_1(x,y)=\sin(\kappa x)e^{\kappa y},  \quad \eta_1(x)=\frac{1}{c_0} \cos(\kappa x),\quad g=\kappa c_0^2. 
\ee
We proceed likewise, substituting \eqref{eqn:stokes exp} into \eqref{eqn:ww}, and solving at higher orders of $\eps$, to successively obtain $\phi_2,\phi_3,\dots$, $\eta_2,\eta_3,\cdots$, and $c_1,c_2,\cdots$. The results are 
\be 
\label{profile_higher}
\phi_2=\frac{1}{2c_0}\kappa e^{\kappa y}\sin(2\kappa x), \quad \eta_2=\frac{1}{2c_0^2}\kappa\cos(2\kappa x), \quad c_1=0,\quad c_2=\frac{\kappa^2}{2c_0}.
\ee 
It turns out \eqref{def:stokes1}  together with \eqref{profile_higher} are enough for our analysis.
\section{The spectral stability problem}\label{sec:spec}

\subsection{The linearized problem}\label{sec:linearize}

Linearizing \eqref{eqn:ww} about $\phi(\eps)$, $u(\eps)$, $\eta(\eps)$ and evaluating the result at $c=c(\eps)$, we arrive at
\ba \label{eqn:linearize}
&\phi_x-\eta_x(\eps)\phi_y-\phi_y(\eps)\eta_x-u=0&&\text{for $y<0$,}\\
&u_x-\eta_x(\eps)u_y-u_y(\eps)\eta_x+\phi_{yy}=0&&\text{for $y<0$,}\\
&\eta_t+(u(\eps)-c)\eta_x+\eta_x(\eps)u-\phi_y=0&&\text{at $y=0$},\\
&\phi_t-cu+((u-c)\phi_y)(\eps)\eta_x+(\eta_x\phi_y)(\eps)u+u(\eps)u+g\eta=0&&\text{at $y=0$},\\
&\phi_y=0&&\text{as $y\rightarrow -\infty$},\\
\ea 
Seeking a solution of \eqref{eqn:linearize} of the form 
\[
\begin{pmatrix}\phi(x,y,t) \\ u(x,y,t) \\ \eta(x,t) \end{pmatrix}
=e^{\lambda t}\begin{pmatrix}\phi(x,y) \\ u(x,y) \\ \eta(x) \end{pmatrix},
\quad\lambda\in\mathbb{C},
\] 
the corresponding spectral problem reads
\begin{subequations}\label{eqn:spec}
\begin{align}
&\phi_x-\eta_x(\eps)\phi_y-\phi_y(\eps)\eta_x-u=0&&\text{for $y<0$,} \label{eqn:spec;phi}\\
&u_x-\eta_x(\eps)u_y-u_y(\eps)\eta_x+\phi_{yy}=0&&\text{for $y<0$,}\label{eqn:spec;u}\\
&\lambda\eta+(u(\eps)-c)\eta_x+\eta_x(\eps)u-\phi_y=0&&\text{at $y=0$},\label{eqn:spec;K}
\intertext{and}
\label{eqn:spec;D}&\lambda\phi+(u-c)(\eps)u+(g-\lambda\phi_y(\eps))\eta+\phi_y(\eps)\phi_y=0&&\text{at $y=0$},\\
&\phi_y=0&&\text{as $y\rightarrow -\infty$},\label{eqn:spec;bdry}
\end{align}
\end{subequations}
where \eqref{eqn:spec;D} follows from \eqref{eqn:spec;K} and
\[
\lambda\phi-cu+((u-c)\phi_y)(\eps)\eta_x+(\eta_x\phi_y)(\eps)u+u(\eps)u+g\eta=0
\quad\text{at $y=0$},
\] 
by the fourth equation of \eqref{eqn:linearize}. Notice that \eqref{eqn:spec;D} is not autonomous. Thus we introduce
\begin{equation}\label{def:tildeu}
\begin{aligned}
\tilde{u}=&\big((c(\eps)-u(\cdot,0;\eps))u-\lambda\phi-\phi_y(\cdot,0;\eps)\phi_y\big)(g-\lambda\phi_y(\cdot,0;\eps))^{-1}
\end{aligned}
\end{equation}
so that \eqref{eqn:spec;D} becomes 
\begin{equation}\label{eqn:bdry;zeta}
\eta-\tilde{u}=0\quad\text{at $y=0$}.
\end{equation}
Clearly, for $\lambda\in\mathbb{C}$, \eqref{def:tildeu} is well defined for $\eps\in\mathbb{R}$ and $|\eps|\ll1$. Conversely,
\begin{equation} \label{def:u(tildeu)}
\begin{aligned}
u=\big((g-\lambda\phi_y(\cdot,0;\eps))\tilde{u}+\lambda\phi+\phi_y(\cdot,0;\eps)\phi_y\big)\big(c(\eps)-u(\cdot,0;\eps)\big)^{-1}
\end{aligned}
\end{equation}
is well defined, provided that $\eps\in\mathbb{R}$ and $|\eps|\ll1$.
We then write \eqref{eqn:spec;phi}-\eqref{eqn:spec;K} also in the unknowns $\left[\begin{array}{rrr}\phi&\tilde{u} &\eta\end{array}\right]$. That is, we replace the term $u(x,y)$ appears in \eqref{eqn:spec;phi} by the right hand side of \eqref{def:u(tildeu)}, the term $u(\cdot,1)$ appears in \eqref{eqn:spec;K} by the right hand side of \eqref{def:u(tildeu)} evaluating at $y=1$, the term $u_y(x,y)$ appears in \eqref{eqn:spec;u} by the right hand side of
\begin{equation} \label{def:u(tildeu)_y}
\begin{aligned}
u_y=\big((g-\lambda\phi_y(\cdot,0;\eps))\tilde{u}_y+\lambda\phi_y+\phi_y(\cdot,0;\eps)\phi_{yy}\big)\big(c(\eps)-u(\cdot,0;\eps)\big)^{-1}
\end{aligned}
\end{equation}
and finally the term $u_x(x,y)$ appears in \eqref{eqn:spec;u} by the right hand side of
\begin{equation} \label{def:u_x}
\begin{aligned}
u_x(x,y)=&f_1'(x)\tilde{u}(x,y)+f_1(x)\tilde{u}_x(x,y)+\lambda f_2'(x)\phi+\lambda f_2(x)\phi_x\\
&+f_3'(x)\phi_y(x,y)+f_3(x)\phi_{xy}(x,y)
\end{aligned}
\end{equation}
where 
$$
\begin{aligned}
f_1(x):=&(g-\lambda\phi_y(\cdot,0;\eps))f_2(x),\\
f_2(x):=&\big(c(\eps)-u(\cdot,0;\eps)\big)^{-1},\\
f_3(x):=&\phi_y(\cdot,0;\eps)f_2(x),
\end{aligned}
$$
and $\phi_x(x,y)$ is replaced by the newly obtained \eqref{eqn:spec;u} in $\left[\begin{array}{rrr}\phi&\tilde{u} &\eta\end{array}\right]$ unknowns and $\phi_{xy}(x,y)$ is replaced by differentiating the newly obtained \eqref{eqn:spec;u} against $y$.
\subsection{Spectral stability and instability}\label{sec:L}

Let $\mathbf{u}=\begin{pmatrix} \phi \\ \tilde{u} \\ \eta \end{pmatrix}$, for convenience we drop the $\tilde{}$ on $\tilde{u}$ from now on,
and we write \eqref{eqn:spec} in concise form \eqref{eqn:LB_intro} where $\mathbf{L}(\lambda): {\rm dom}(\mathbf{L}) \subset Y \to Y$ is given by
\ba \label{def:L}
&\mathbf{L}(\lambda)\mathbf{u}=\begin{pmatrix} (\lambda \phi+g u)/c_0\\ -(c_0^2\phi_{yy}+\lambda^2\phi+g\lambda u)/(gc_0)\\
(\lambda\eta-\phi_y(0))/c_0\end{pmatrix},\\
&{\rm dom}(\mathbf{L})
=\{\mathbf{u}\in H^2_c(-\infty,0)\times H^1_c(-\infty,0)\times \mathbb{C}:\eta-u(0)=0, \phi_y(-\infty)=0\}
\ea 
and $\mathbf{B}(x;\lambda,\eps): \mathbb{R}\times {\rm dom}(\mathbf{L}) \subset \mathbb{R}\times Y \to Y$ is the higher $O(\eps)$ part of \eqref{eqn:spec;phi}-\eqref{eqn:spec;K}.
Let 
\[
\mathcal{L}(\lambda,\eps):{\rm dom}(\mathcal{L})\subset X \to X,
\] 
where 
\begin{equation}\label{def:operator}
\mathcal{L}(\lambda,\eps)\mathbf{u}=\mathbf{u}_x-(\mathbf{L}(\lambda)+\mathbf{B}(x;\lambda,\eps))\mathbf{u},
\end{equation}
\begin{equation}\label{def:X}
X=L^2(\mathbb{R};Y)\quad\text{and}\quad 
{\rm dom}(\mathcal{L})=H^1(\mathbb{R};Y)\bigcap L^2(\mathbb{R};{\rm dom}(\mathbf{L}))
\end{equation}
is dense in $X$, so that \eqref{eqn:LB_intro} becomes 
\begin{equation}\label{eqn:L}
\mathcal{L}(\lambda,\eps)\mathbf{u}=0.
\end{equation}
We regard $\mathcal{L}(\eps)$ as $\mathcal{L}(\lambda,\eps)$, parametrized by $\lambda\in\mathbb{C}$. We then follow section 4 of \cite{hur2021unstable} or section 3.4 of \cite{hur2021unstable_cap} to define spectrum of $\mathcal{L}(\eps)$, make the center manifold reduction, and define a periodic Evans function. We collect the key computation results below.

\subsection{The spectrum of \texorpdfstring{$\mathcal{L}(0)$}{Lg}. The reduced space}\label{sec:eps=0}
When $\eps=0$ in \eqref{eqn:LB_intro},
solving
\[
ik\mathbf{u}=\mathbf{L}(\lambda)\mathbf{u},
\quad\text{for some $k\in\mathbb{R}$}
\] 
reveals that 
\begin{equation}\label{eqn:sigma}
\lambda=i\sigma,\quad\text{where}\quad
\sigma\in\mathbb{R}\quad\text{and}\quad(\sigma-c_0k)^2=g|k|.
\end{equation}
Thus ${\rm spec}(\mathcal{L}(0))=i\mathbb{R}$, implying that the Stokes wave of zero amplitude is spectrally stable. 
Let
\begin{equation}\label{def:sigma}
\sigma_\pm(k)=c_0k\pm\sqrt{g |k|}=c_0\left(k\pm \sqrt{\kappa|k|}\right),\quad\text{where}\quad k\in \mathbb{R}.
\end{equation}
We plot $\sigma_\pm(k)$ for $c_0=\kappa=1$ in Figure \ref{fig:dispersion}.
\begin{figure}[htbp]
\begin{center}
\includegraphics[scale=0.3]{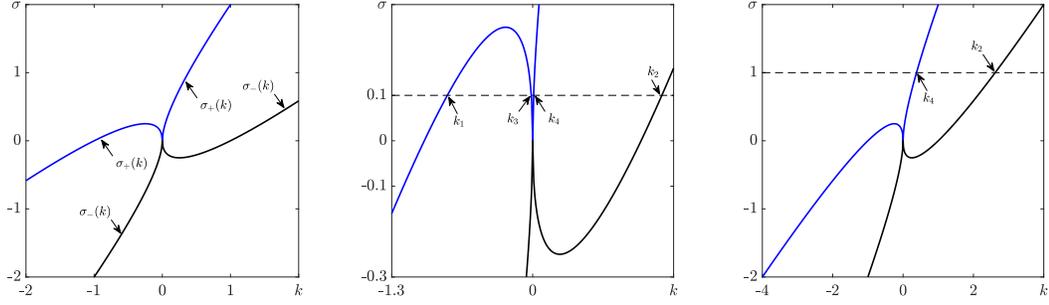}
\caption{Left: The graphs of $\sigma_+(k)$ (blue) and $\sigma_-(k)$ (black) for $c_0=\kappa=1$. Middle: When $0<\sigma<\sigma_c$, $\sigma_\pm(k)=\sigma$ have four roots $k_j(\sigma)$, $j=1,2,3,4$. Right: When $\sigma>\sigma_c$, $\sigma_\pm(k)=\sigma$ have two roots $k_j(\sigma)$, $j=2,4$.}\label{fig:dispersion}
\end{center}
\end{figure}

Let $k_c=-\frac{\kappa}{4}$ and $\sigma_c=\sigma_+(k_c)=\frac{c_0\kappa}{4}$ so that $\sigma'_+(k_c)=0$.
Let $k_2(\sigma)>0$ denote the simple root of $\sigma_-(k)=\sigma(\geq0)$, and let $k_4(\sigma)>0$ be the simple root of $\sigma_+(k)=\sigma(>0)$, and $k_4(0)=0$. When $0\leq\sigma\leq\sigma_c$, let $k_1(\sigma)\leq k_3(\sigma)\leq0$ be the other two roots of $\sigma_+(k)=\sigma$. See the middle and right panels of Figure~\ref{fig:dispersion}. Thus:
\begin{enumerate}
\item When $\sigma=0$, $k_j(0)=(-1)^j\kappa$, $j=1,2$, and $k_j(0)=0$, $j=3,4$; 
\item When $0<\sigma<\sigma_c$, $\sigma_-(k_2)=\sigma_+(k_j)=\sigma$, $j=1,3,4$, and $k_1<k_3<0<k_4<k_2$;
\item When $\sigma=\sigma_c$, $k_j(\sigma_c)=k_c$, $j=1,3$, and $\sigma_-(k_2)=\sigma_+(k_4)=\sigma_c$, $k_1=k_3<0<k_4<k_2$;
\item When $\sigma>\sigma_c$, $\sigma_-(k_2)=\sigma_+(k_4)=\sigma$ and $0<k_4<k_2$.
\end{enumerate}


\begin{lemma}[Spectrum of $\mathbf{L}({i\sigma})$]\label{lem:eps=0} 
When $\sigma=0$, $ik_j(0)=(-1)^ji \kappa$, $j=1,2$, are simple eigenvalues of $\mathbf{L}(0):{\rm dom}(\mathbf{L})\subset Y\to Y$, and
\be\label{def:phi12}
\ker(\mathbf{L}(0)-ik_j(0)\mathbf{1})={\rm span}\{\boldsymbol{\phi}_j(0)\},\quad\boldsymbol{\phi}_j(0)=\begin{pmatrix}
e^{\kappa y}\\(-1)^jie^{\kappa y}/c_0\\(-1)^ji/c_0\end{pmatrix},
\ee
where $\mathbf{1}$ denotes the identity operator.
Also, $ik_j(0)=0$, $j=3,4$, is an eigenvalue of $\mathbf{L}(0)$ with algebraic multiplicity $2$ and geometric multiplicity $1$, and 
\be \label{phi3}
\ker(\mathbf{L}(0))={\rm span}\{\boldsymbol{\phi}_3(0)\},\quad
\boldsymbol{\phi}_3(0)=\begin{pmatrix} g/c_0 \\ 0 \\ 0 \end{pmatrix}.
\ee
When $0<\sigma<\sigma_c$, $ik_j(\sigma)$, $j=1,2,3,4$, are simple eigenvalues of $\mathbf{L}(i\sigma)$, and
\ba\label{def:phi1-4}
&\ker(\mathbf{L}(i\sigma)-ik_j(\sigma)\mathbf{1})=\ker((\mathbf{L}(i\sigma)-ik_j(\sigma)\mathbf{1})^2)
={\rm span}\{\boldsymbol{\phi}_j(\sigma)\},\\
&\boldsymbol{\phi}_j(\sigma)=\begin{pmatrix}
e^{|k_j(\sigma)|y}\\i(k_j(\sigma)c_0-\sigma)e^{|k_j(\sigma)|y}/g\\i(k_j(\sigma)c_0-\sigma)/g\end{pmatrix}.
\ea
When $\sigma>\sigma_c$, $ik_j(\sigma)$, $j=2,4$, are simple eigenvalues of $\mathbf{L}(i\sigma)$, and \eqref{def:phi1-4} holds. \\
Let $Y(\sigma)$ denote the eigenspace of $\mathbf{L}(i\sigma)$ associated with its finitely many and purely imaginary eigenvalues and $\boldsymbol{\Pi}(\sigma): {\rm dom}(\mathbf{L})\subset Y \to Y(\sigma)$
be the projection of ${\rm dom}(\mathbf{L})$ onto $Y(\sigma)$, which commutes with $\mathbf{L}(i\sigma)$.
\end{lemma}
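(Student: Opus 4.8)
The plan is to compute ${\rm spec}(\mathbf{L}(i\sigma))$ by solving the eigenvalue equation $\mathbf{L}(i\sigma)\mathbf{u}=ik\mathbf{u}$ explicitly and reading off multiplicities from the dispersion relation \eqref{eqn:sigma}. Writing $\mathbf{u}=(\phi,u,\eta)^T$ and using \eqref{def:L}, the first component yields the algebraic relation $u=\frac{i(kc_0-\sigma)}{g}\phi$, and substituting this into the second component collapses it to the constant-coefficient ODE $\phi_{yy}=k^2\phi$ on $(-\infty,0)$. For each real $k\neq0$ the conditions $\phi\in H_c^2(-\infty,0)$ and $\phi_y(-\infty)=0$ admit only the bounded solution $\phi=e^{|k|y}$ (the companion $e^{-|k|y}$ being excluded), so the geometric eigenspace is at most one-dimensional; the boundary constraint $\eta=u(0)$ then fixes $\eta=\frac{i(kc_0-\sigma)}{g}$, and the third component reduces exactly to $(kc_0-\sigma)^2=g|k|$. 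Hence the purely imaginary spectrum is $\{ik:k\in\mathbb{R},\ (kc_0-\sigma)^2=g|k|\}$, i.e.\ the roots $k_j(\sigma)$ of \eqref{def:sigma} catalogued in (i)--(iv), and feeding each $k_j(\sigma)$ back through the relations above reproduces the eigenvector formulas \eqref{def:phi12}, \eqref{phi3}, and \eqref{def:phi1-4}.

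Next I would establish simplicity for $0<\sigma<\sigma_c$ and for $\sigma>\sigma_c$, where the relevant $k_j(\sigma)$ are pairwise distinct. Geometric simplicity is already in hand, so it remains to rule out a Jordan chain, i.e.\ to show $(\mathbf{L}(i\sigma)-ik_j\mathbf{1})\mathbf{v}=\boldsymbol{\phi}_j(\sigma)$ has no solution in ${\rm dom}(\mathbf{L})$. Running the same reduction with this resonant right-hand side produces a forced ODE $\psi_{yy}-k_j^2\psi=-2ik_j e^{|k_j|y}$, whose admissible solution is $\psi=Ce^{|k_j|y}-i\,\mathrm{sgn}(k_j)\,ye^{|k_j|y}$; imposing the boundary and third-component relations then yields a single scalar equation for $C$ in which the coefficient of $C$ vanishes identically by virtue of the dispersion relation, leaving a solvability condition that, combined with \eqref{eqn:sigma}, forces $k_j=k_c$ and hence $\sigma=\sigma_c$. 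Since $\sigma\in(0,\sigma_c)\cup(\sigma_c,\infty)$ excludes this coalescence, no generalized eigenvector exists and $\ker\big((\mathbf{L}(i\sigma)-ik_j\mathbf{1})^2\big)=\ker(\mathbf{L}(i\sigma)-ik_j\mathbf{1})$, as claimed.

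The degenerate case $\sigma=0$ I would treat by hand, since the reduction above presumes $k\neq0$. The roots $k=\pm\kappa$ are simple and give the simple eigenvalues $\pm i\kappa$ exactly as in the generic argument, recovering \eqref{def:phi12}. At the double root $k=0$ the ODE degenerates to $\phi_{yy}=0$, which together with $\phi_y(-\infty)=0$ forces $\phi$ constant and hence $u=\eta=0$, so $\ker\mathbf{L}(0)={\rm span}\{\boldsymbol{\phi}_3(0)\}$ is one-dimensional, proving geometric multiplicity one. For algebraic multiplicity two I would exhibit the Jordan chain explicitly: $\mathbf{v}=(0,1,1)^T$ lies in ${\rm dom}(\mathbf{L})$ and satisfies $\mathbf{L}(0)\mathbf{v}=\boldsymbol{\phi}_3(0)$, whereas the next link $\mathbf{L}(0)\mathbf{w}=\mathbf{v}$ is unsolvable because it would force the first component of $\mathbf{w}$ to satisfy $\phi_{yy}=-g/c_0\neq0$, incompatible with $\phi_y(-\infty)=0$; this pins the algebraic multiplicity at exactly two.

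Finally, to construct $\boldsymbol{\Pi}(\sigma)$ I would verify that the finitely many purely imaginary eigenvalues $ik_j(\sigma)$ are isolated points of ${\rm spec}(\mathbf{L}(i\sigma))$, so that $Y(\sigma)$ is finite-dimensional and the Riesz projection along a small contour enclosing them is well defined and commutes with $\mathbf{L}(i\sigma)$. I expect the main difficulty to lie here and in the simplicity step: confirming isolatedness requires controlling the remainder of ${\rm spec}(\mathbf{L}(i\sigma))$ on the nonstandard space $H_c^n(-\infty,0)$ of functions tending to a constant, showing in particular that every admissible complex $k$, with its exponential mode $e^{|k|y}$, keeps $ik$ off the imaginary axis, and the bookkeeping that ties ``simple root of the dispersion relation'' to ``absence of a Jordan block'' must be carried out carefully enough to handle the coalescences at $\sigma=0$ and $\sigma=\sigma_c$.
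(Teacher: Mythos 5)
Your proposal is correct and is essentially the direct computation that underlies the lemma: the paper states this result without a written proof (deferring to the analogous finite-depth computations in \cite{hur2021unstable}), and your reduction of $\mathbf{L}(i\sigma)\mathbf{u}=ik\mathbf{u}$ to $u=i(kc_0-\sigma)\phi/g$ and $\phi_{yy}=k^2\phi$, the Jordan-chain solvability condition whose only admissible solution is $k_j=k_c$, $\sigma=\sigma_c$, and the explicit chain $\mathbf{L}(0)(0,1,1)^T=\boldsymbol{\phi}_3(0)$ with the next link obstructed by $\phi_y(-\infty)=0$ all check out against \eqref{def:phi12}, \eqref{phi3} and \eqref{def:phi1-4}. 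The one step you flag but do not complete --- isolatedness of the $ik_j(\sigma)$ in ${\rm spec}(\mathbf{L}(i\sigma))$ on $Y$, which is what makes the Riesz projection $\boldsymbol{\Pi}(\sigma)$ well defined --- is likewise left implicit in the paper, so it is not a gap relative to the paper's own treatment, though it is the right place to be cautious.
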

\begin{remark}
When $\sigma=0$, $\boldsymbol{\phi}_4(0)=\begin{pmatrix} 0 & 1 & 1 \end{pmatrix}^T$ is an generalized eigenvector corresponding to eigenvalue $0$. Later, we only reduce the spectral problem to the finite dimensional space spanned by $\{\boldsymbol{\phi}_1(0),\boldsymbol{\phi}_2(0),\boldsymbol{\phi}_3(0)\}$. This is because technically it is not possible to find the expansion of the reduction function when $j=4$. That is when computing for $\mathbf{w}^{1,0}_4$, one find that it is necessary to use a term involving $y^2$ in the its first entry, just like \cite{hur2021unstable} Appendix E first entry of $\mathbf{w}_3^{1,0}$. But now, $y^2\notin L^2(-\infty,0)$.
\end{remark}
\subsection{Computation of \texorpdfstring{$\boldsymbol{\Pi}(\sigma)$}{Lg}}\label{sec:proj}
We begin by constructing the adjoint of $\mathbf{L}(\lambda):{\rm dom}(\mathbf{L})\subset Y\to Y$. For $\mathbf{u}_1:=\begin{pmatrix}\phi_1+\phi_{1,\infty}\\ u_1+u_{1,\infty} \\ \eta_1\end{pmatrix}, 
\mathbf{u}_2:=\begin{pmatrix}\phi_2+\phi_{2,\infty}\\ u_2+u_{2,\infty}\\ \eta_2 \end{pmatrix} \in Y$ where $\phi_{1,2}\in H^1(-\infty,0)$, $u_{1,2}\in L^2(-\infty,0)$, and $\phi_{1,2,\infty},u_{1,2,\infty},\eta_1,\eta_2\in \mathbb{C}$, the inner product of $\mathbf{u}_1$ with $\mathbf{u}_2$ is given by
\be\label{def:inner}
\langle \mathbf{u}_1, \mathbf{u}_2\rangle
=\int^0_{-\infty}\phi_1\phi_2^*+{\phi_1}_y{\phi_2}_y^*~dy+\phi_{1,\infty}\phi_{2,\infty}^*+\int^0_{-\infty} u_1u_2^*~dy+u_{1,\infty}u_{2,\infty}^*+\eta_1\eta_2^*,
\ee
where the asterisk means complex conjugation. For $\mathbf{u}_1\in {\rm dom}(\mathbf{L})\subset Y$ and $\mathbf{u}_2\in Y$, 
\begin{align*}
&\langle \mathbf{L}(\lambda)\mathbf{u}_1, \mathbf{u}_2\rangle
=\left\langle\begin{pmatrix}(\lambda\phi_1+gu_1)/c_0+(\lambda\phi_{1,\infty}+gu_{1,\infty})/c_0\\-(c_0^2{\phi_1}_{yy}+\lambda^2\phi_1+g\lambda u_1)/(g c_0)+(\lambda^2\phi_{1,\infty}+g\lambda u_{1,\infty})/(g c_0)\\(\lambda\eta_1-{\phi_1}_y(0))/c_0\end{pmatrix},
\begin{pmatrix} \phi_2+\phi_{2,\infty} \\ u_2+u_{2,\infty} \\ \eta_2 \end{pmatrix}\right\rangle \\
=&\int^0_{-\infty}\phi_1(\lambda^*\phi_2/c_0)^*+{\phi_1}_y(\lambda^*{\phi_2}_y/c_0)^*+g u_1\phi_2^*/c_0-g u_1{\phi_2}_{yy}^*/c_0+c_0{\phi_1}_y{u_2}_y^*/g~dy\\
&-\int^0_{-\infty}\phi_1({\lambda^*}^2u_2/(gc_0))^*+u_1(\lambda^*u_2/c_0)^*~dy+g(\eta_1-u_{1,\infty}){\phi_2}_y^*(0)/c_0\\&-g u_1(-\infty){\phi_2}_y^*(-\infty)/c_0-{\phi_1}_y(0)(c_0u_2(0)/g+\eta_2/c_0)^*+\lambda\eta_1\eta_2^*/c_0\\&+(\lambda\phi_{1,\infty}+gu_{1,\infty})\phi_{2,\infty}^*/c_0+(\lambda^2\phi_{1,\infty}+g\lambda u_{1,\infty})u_{2,\infty}^*/(g c_0)\\
=&\int^0_{-\infty}\phi_1(\lambda^*\phi_2/c_0)^*+{\phi_1}_y(\lambda^*{\phi_2}_y/c_0)^*+g u_1\phi_2^*/c_0-g u_1{\phi_2}_{yy}^*/c_0+c_0{\phi_1}_y{u_2}_y^*/g~dy\\
&-\int^0_{-\infty}\phi_1({\lambda^*}^2u_2/(gc_0))^*+u_1(\lambda^*u_2/c_0)^*~dy+g\eta_1{\phi_2}_y^*(0)/c_0+\lambda\eta_1\eta_2^*/c_0\\
&+(\lambda\phi_{1,\infty}+gu_{1,\infty})\phi_{2,\infty}^*/c_0+(\lambda^2\phi_{1,\infty}+g\lambda u_{1,\infty})u_{2,\infty}^*/(g c_0)\\
=&\left\langle \begin{pmatrix} \phi_1+\phi_{1,\infty} \\ u_1+u_{1,\infty} \\ \eta_1 \end{pmatrix}, 
\begin{pmatrix}\lambda^*\phi_2/c_0+c_0u_2/g+\lambda^*\phi_{2,\infty}/c_0+{\lambda^*}^2u_{2,\infty}/(gc_0)\\ g\phi_2/c_0-g{\phi_2}_{yy}/c_0-\lambda^*u_2/c_0+g\phi_{2,\infty}/c_0+\lambda^*u_{2,\infty}/c_0-g{\phi_2}_y(0)/c_0\\g{\phi_2}_y(0)/c_0+\lambda^*\eta_2/c_0\end{pmatrix}\right\rangle\\
&-\int^0_{-\infty}\phi_1(c_0^2+{\lambda^*}^2)u_2^*/(gc_0)~dy \\
=&\left\langle \begin{pmatrix} \phi_1+\phi_{1,\infty} \\ u_1+u_{1,\infty} \\ \eta_1 \end{pmatrix}, 
\begin{pmatrix}\lambda^*\phi_2/c_0+c_0u_2/g+\lambda^*\phi_{2,\infty}/c_0+{\lambda^*}^2u_{2,\infty}/(gc_0)\\ g\phi_2/c_0-g{\phi_2}_{yy}/c_0-\lambda^*u_2/c_0+g\phi_{2,\infty}/c_0+\lambda^*u_{2,\infty}/c_0-g{\phi_2}_y(0)/c_0\\g{\phi_2}_y(0)/c_0+\lambda^*\eta_2/c_0\end{pmatrix}\right\rangle\\
&+\left\langle \begin{pmatrix} \phi_1 \\ u_1 \\ \eta_1 \end{pmatrix},
\begin{pmatrix}\phi_p\\ u_p\\ \eta_p \end{pmatrix}\right\rangle
=:\langle \mathbf{u}_1,\mathbf{L}(\lambda)^\dag\mathbf{u}_2\rangle,
\end{align*}
where $\mathbf{L}(\lambda)^\dag$ denotes the adjoint of $\mathbf{L}(\lambda)$. Here the first equality uses \eqref{def:L}, and the second equality uses \eqref{def:inner} and follows after integration by parts, because if $\mathbf{u}_1\in {\rm dom}(\mathbf{L})$ then $\eta_1=u_1(0)+u_{1,\infty}$ and ${\phi_1}_y(-\infty)=0$ (see \eqref{def:L}). The third equality follows, provided that 
\begin{equation}\label{eqn:bdry;adj}
u_2(0)+\kappa\eta_2=0\quad\text{and}\quad {\phi_2}_y(-\infty)=0,
\end{equation}
so that the inner product is continuous with respect to $\phi_1\in H^1(-\infty,0)$ and $u_1\in L^2(-\infty,0)$ (see \eqref{def:Y}),
and the fourth equality uses \eqref{def:inner}. The fifth equality follows, provided that 

\begin{align*}
-\int^0_{-\infty}\phi_1(c_0^2+{\lambda^*}^2)u_2^*/(gc_0)~dy &=
\left\langle \begin{pmatrix} \phi_1 \\ u_1 \\ \eta_1 \end{pmatrix}, 
\begin{pmatrix}\phi_p\\ u_p\\ \eta_p \end{pmatrix}\right\rangle \\
&=\int^0_{-\infty} (\phi_1\phi_p^*+{\phi_1}_y{\phi_p}_y^*)~dy+\int^0_{-\infty} u_1u_p^*~dy+\eta_1\eta_p^*\\
&=\int^0_{-\infty}\phi_1(\phi_p-{\phi_p}_{yy})^*~dy+\phi_1(0){\phi_p}_y^*(0)-\phi_1(-\infty){\phi_p}_y^*(-\infty),
\end{align*}
where the last equality assumes that $u_p=0$ and $\zeta_p=0$, because the left side does not depend on $u_1$ or $\eta_1$, and it follows after integration by parts. This works, provided that 
\begin{equation}\label{eqn:up}
\left\{\begin{aligned}
&{\phi_p}_{yy}-\phi_p=(c_0^2+{\lambda^*}^2)u_2/(gc_0)\quad\text{for $y<0$},\\
&{\phi_p}_y(0)=0,\\
&{\phi_p}_y(-\infty)=0.
\end{aligned}\right.
\end{equation}
To recapitulate, 
\[
\mathbf{L}(\lambda)^\dag:{\rm dom}(\mathbf{L}^\dag) \subset Y\to Y,
\] 
where 
\[
L(\lambda)^\dag\begin{pmatrix}\phi_2+\phi_{2,\infty}\\ u_2+u_{2,\infty}\\ \eta_2 \end{pmatrix}
=\begin{pmatrix}\lambda^*\phi_2/c_0+c_0u_2/g+\lambda^*\phi_{2,\infty}/c_0+{\lambda^*}^2u_{2,\infty}/(gc_0)+\phi_p\\ g\phi_2/c_0-g{\phi_2}_{yy}/c_0-\lambda^*u_2/c_0+g\phi_{2,\infty}/c_0+\lambda^*u_{2,\infty}/c_0-g{\phi_2}_y(0)/c_0\\g{\phi_2}_y(0)/c_0+\lambda^*\eta_2/c_0\end{pmatrix}\\
\]
\begin{equation}\label{def:up}
\phi_p(y)=-\frac{c_0^2+{\lambda^*}^2}{gc_0}\int_{y}^0\cosh(y')u_2(y')dy'e^{y}-\frac{c_0^2+{\lambda^*}^2}{gc_0}\int_{-\infty}^ye^{y'}u_2(y')dy'\cosh(y),
\ee 
and
\[
{\rm dom}(\mathbf{L}^\dag)=\{(\phi_2+\phi_{2,\infty}, u_2+u_{2,\infty},\eta_2)^T \in H^2_c(-\infty,0)\times H^1_c(-\infty,0)\times \mathbb{C}:
u_2(0)+\kappa\eta_2=0, {\phi_2}_y(-\infty)=0\}.
\]
When $\sigma=0$, we find\be\label{def:Pi0} 
\boldsymbol{\Pi}(0)\mathbf{u}=\sum_{i=1}^3\langle \mathbf{u},\boldsymbol{\psi}_i(0)\rangle\boldsymbol{\phi}_i(0),
\ee 
where
\be 
\boldsymbol{\psi}_1(0)=\begin{pmatrix}
\frac{e^y - \kappa e^{\kappa y}}{1-\kappa^2}\\ic_0(\kappa e^{\kappa y}+1)\\-ic_0
\end{pmatrix},\quad \boldsymbol{\psi}_2(0)=\begin{pmatrix}
\frac{e^y - \kappa e^{\kappa y}}{1-\kappa^2}\\-ic_0(\kappa e^{\kappa y}+1)\\ic_0
\end{pmatrix},\quad  \boldsymbol{\psi}_3(0)=\begin{pmatrix}
\frac{c_0}{g}\\0\\0
\end{pmatrix}.
\ee 
\begin{remark*}
The first entry of $\boldsymbol{\psi}_1(0)$ ($\boldsymbol{\psi}_2(0)$) appears to be not defined when $\kappa=1$. But $\kappa=1$ turns out to be a removable singularity.
\end{remark*}
When $\sigma>\sigma_c$, we infer from Lemma~\ref{lem:eps=0} that $-ik_j(\sigma)$, $j=2,4$, are simple eigenvalues of $\mathbf{L}(i\sigma)^\dag$, and a straightforward calculation reveals that the corresponding eigenfunctions are
\be \label{def:psi24}
\boldsymbol{\psi}_j(\sigma)=\frac{1}{{c_{0}}^2{k_j}^2-2c_{0}k_j\sigma +\kappa c_{0}\sigma +\sigma ^2}\begin{pmatrix} -\frac{k_j\kappa \left({c_{0}}^2-\sigma ^2\right)}{\left({k_j}^2-1\right)}e^{y}-\frac{\kappa \left(\sigma ^2-{c_{0}}^2{k_j}^2\right)}{\left({k_j}^2-1\right)}e^{k_j y}+\frac{c_{0}\kappa \sigma ^2}{\left(2\sigma -c_{0}k_j\right)}\\
 ic_0^2\kappa^2(\sigma - c_0k_j)e^{k_jy}-\frac{ic_0^3\kappa^2(\sigma - c_0k_j)}{2\sigma - c_0k_j}\\
-ic_0^2\kappa(\sigma - c_0k_j)\end{pmatrix},
\ee 
so that  $\langle \boldsymbol{\phi}_{j}(\sigma),\boldsymbol{\psi}_{j'}(\sigma)\rangle=\delta_{jj'}$, $j,j'=2,4$, where $\boldsymbol{\phi}_j(\sigma)$, $j=2,4$, are in \eqref{def:phi1-4}. Thus
\begin{equation}\label{def:Pi;high}
\boldsymbol{\Pi}(\sigma)\mathbf{u}=\langle \mathbf{u},\boldsymbol{\psi}_2(\sigma)\rangle\boldsymbol{\phi}_2(\sigma)
+\langle \mathbf{u},\boldsymbol{\psi}_4(\sigma)\rangle\boldsymbol{\phi}_4(\sigma).
\end{equation}
When $0<\sigma\leq\sigma_c$, we proceed likewise to define $\boldsymbol{\Pi}(\sigma)$. We do not include the formulae here.

Based on the center manifold reduction by Mielke \cite{Mielke;reduction,alma99954915956405899}, we reduce the spectral problem to
\be \label{eqn:LB;u1}
{\mathbf{v}}_x=\mathbf{L}(i\sigma)\mathbf{v}
+\boldsymbol{\Pi}(\sigma)\mathbf{B}(x;\sigma,\delta,\eps)
(\mathbf{v}(x)+\mathbf{w}(x,\mathbf{v}(x);\sigma,\delta,\eps)).
\ee 
where $\mathbf{v}\in Y(\sigma)$ and the reduction function $\mathbf{w}(x,\mathbf{v}(x);\sigma,\delta,\eps))$ satisfies
$$
{\mathbf{w}}_x=\mathbf{L}(i\sigma)\mathbf{w}
+(\mathbf{1}-\boldsymbol{\Pi}(\sigma))\mathbf{B}(x;\sigma,\delta,\eps)(\mathbf{v}(x)+\mathbf{w}(x)).
$$
Let $\mathbf{a}(x)$ be the coordinate of $\mathbf{v}(x)$ with respect to the ordered basis of $Y(\sigma)$ given in Lemma \eqref{lem:eps=0}. We further rewrite \eqref{eqn:LB;u1} as
\be\label{eqn:A}
\mathbf{a}_x=\mathbf{A}(x;\sigma,\delta,\eps)\mathbf{a},
\ee 

\begin{definition}[The periodic Evans function]\label{def:Evans}\rm
For $\lambda=i\sigma+\delta$, $\sigma\in \mathbb{R}$, $\delta\in\mathbb{C}$ and $|\delta|\ll1$, for $\eps\in\mathbb{R}$ and $|\eps|\ll1$, let $\mathbf{X}(x;\sigma,\delta,\eps)$ denote the fundamental solution of \eqref{eqn:A} such that $\mathbf{X}(0;\sigma,\delta,\eps)=\mathbf{I}$, where $\mathbf{I}$ is the identity matrix. Let $\mathbf{X}(T;\sigma,\delta,\eps)$ be the {\em monodromy matrix} for \eqref{eqn:A}, and for $k\in\mathbb{R}$,
\be\label{def:Delta}
\Delta(\lambda,k;\eps)=\det(e^{ikT}\mathbf{I}-\mathbf{X}(T;\sigma,\delta,\eps))
\ee 
the {\em periodic Evans function}, where $T=2\pi/\kappa$ is the period of a Stokes wave.
\end{definition}
\begin{corollary}[spectrum of $\mathcal{L}(0)$, dispersion relation]\label{cor_dispersion}
For constant wave $\eps=0$, the periodic Evans function \eqref{def:Delta} satisfies
\be 
\Delta(i\sigma,k_j(\sigma);0)=0,\quad 
\ee 
\end{corollary}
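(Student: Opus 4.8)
The plan is to exploit the fact that the entire Floquet construction trivializes at $\eps=0$, where the $x$-dependent perturbation vanishes. First I would observe that $\mathbf{B}(x;\lambda,\eps)$ is, by definition, the higher $O(\eps)$ part of \eqref{eqn:spec;phi}--\eqref{eqn:spec;K}, so $\mathbf{B}(x;\lambda,0)=0$. Consequently the center-manifold reduction \eqref{eqn:LB;u1} collapses: the reduction function $\mathbf{w}$ vanishes and the reduced flow is the autonomous linear system $\mathbf{v}_x=\mathbf{L}(i\sigma)\mathbf{v}$ restricted to the finite-dimensional eigenspace $Y(\sigma)$.

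Next I would pass to the coordinate vector $\mathbf{a}(x)$ with respect to the ordered eigenbasis $\{\boldsymbol{\phi}_j(\sigma)\}$ of $Y(\sigma)$ furnished by Lemma~\ref{lem:eps=0}. Since each $\boldsymbol{\phi}_j(\sigma)$ is an eigenvector of $\mathbf{L}(i\sigma)$ with eigenvalue $ik_j(\sigma)$, the coefficient matrix appearing in \eqref{eqn:A} is the constant diagonal matrix $\mathbf{A}(x;\sigma,0,0)=\diag(ik_j(\sigma))$. The fundamental solution normalized by $\mathbf{X}(0)=\mathbf{I}$ is then the matrix exponential $\mathbf{X}(x;\sigma,0,0)=\diag(e^{ik_j(\sigma)x})$, so the monodromy matrix is $\mathbf{X}(T;\sigma,0,0)=\diag(e^{ik_j(\sigma)T})$. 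Substituting into the definition \eqref{def:Delta} yields the factorization
$$\Delta(i\sigma,k;0)=\det\big(e^{ikT}\mathbf{I}-\diag(e^{ik_j(\sigma)T})\big)=\prod_j\big(e^{ikT}-e^{ik_j(\sigma)T}\big),$$
and choosing the Floquet exponent $k=k_j(\sigma)$ annihilates the $j$-th factor, giving $\Delta(i\sigma,k_j(\sigma);0)=0$.

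The only point requiring genuine care is the degenerate spectra flagged in Section~\ref{sec:eps=0}: at $\sigma=0$ the eigenvalue $0$ of $\mathbf{L}(0)$ carries a Jordan block, and at $\sigma=\sigma_c$ the exponents coincide, $k_1(\sigma_c)=k_3(\sigma_c)$. In the former case the reduction is taken (by the remark following Lemma~\ref{lem:eps=0}) only over $\{\boldsymbol{\phi}_1(0),\boldsymbol{\phi}_2(0),\boldsymbol{\phi}_3(0)\}$, which are all genuine eigenvectors, so $\mathbf{A}(x;0,0,0)$ is still diagonal and the argument is unaffected. In the latter case the monodromy matrix may fail to be diagonalizable, but its eigenvalues remain $e^{ik_j(\sigma_c)T}$ with the correct multiplicity, so the determinant still factors exactly as above; hence the conclusion persists. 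I therefore expect the main obstacle to be purely bookkeeping—verifying that the constructed reduced matrix at $\eps=0$ really is $\diag(ik_j(\sigma))$ in the chosen basis—rather than any analytic difficulty.
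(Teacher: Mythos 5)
Your proposal is correct and follows essentially the same route as the paper: set $\eps=0$ so that $\mathbf{B}$ vanishes, reduce to the autonomous system $\mathbf{v}_x=\mathbf{L}(i\sigma)\mathbf{v}$, and read off that the monodromy matrix has eigenvalues $e^{ik_j(\sigma)T}$, so the determinant in \eqref{def:Delta} vanishes at $k=k_j(\sigma)$. The paper states this in one line; your version merely spells out the diagonalization and the degenerate cases explicitly.
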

\begin{proof}
When setting $\delta=\eps=0$, because $\mathbf{B}(x;\sigma,0,0)=0$, \eqref{eqn:LB;u1} reduces to ${\mathbf{v}}_x=\mathbf{L}(i\sigma)\mathbf{v}$ whose solutions are discussed in Section \ref{sec:eps=0}.
\end{proof}
We thereby study the nearby root $(i\sigma+\delta,k_j(\sigma)+dk,\eps)$ of the periodic Evans function $\Delta$ \eqref{def:Delta} for $\delta\in \mathbb{C}$, $dk,\eps\in \mathbb{R}$ and $|\delta|,|dk|,|\eps|\ll 1$.
And we expand the fundamental solution $\mathbf{X}(x;\sigma,\delta,\eps)$ of \eqref{eqn:A} as
\be 
\label{def:X;exp}
\mathbf{X}(x;\sigma,\delta,\eps)=\sum_{m+n=0}^{\infty}\mathbf{a}^{(m,n)}(x;\sigma)\delta^m\eps^n\in \mathbb{C}^{dim(Y(\sigma))\times dim(Y(\sigma))}.
\ee 
where $\mathbf{a}^{(m,n)}=(a_{jk}^{(m,n)}(x))$, $j,k=1,2,\ldots,\dim(Y(\sigma))$ and $m,n=0,1,2,\dots$.
\section{The Benjamin--Feir instability}\label{sec:BF}
In this section we set $\sigma=0$ and study the roots of periodic Evans function near the origin.
\begin{lemma} By direct computation, we find 
\ba\label{eqn:amn}
&\mathbf{a}^{(0,0)}(T)=Id,&& \mathbf{a}^{(0,1)}(T)=\mathbf{0},\\
&\mathbf{a}^{(1,0)}(T)=diag\left\{\frac{4\pi}{c_0\kappa},\frac{4\pi}{c_0\kappa},\frac{2\pi}{c_0\kappa}\right\},&&\mathbf{a}^{(2,0)}(T)=diag\left\{\frac{2\pi(4\pi - i)}{c_0^2\kappa^2},\frac{2\pi(4\pi + i)}{c_0^2\kappa^2},\frac{2\pi^2}{c_0^2\kappa^2}\right\},\\
&\mathbf{a}^{(1,1)}(T)=\begin{pmatrix} 0 & 0 & \frac{2\pi\kappa}{c_0}\\ 0 & 0 & \frac{2\pi\kappa}{c_0}\\0&0&0\end{pmatrix},&&\mathbf{a}^{(0,2)}(T)=\begin{pmatrix} -\frac{2i\pi\kappa^2}{c_0^2} & \frac{2i\pi\kappa^2}{c_0^2} & 0\\ -\frac{2i\pi\kappa^2}{c_0^2} & \frac{2i\pi\kappa^2}{c_0^2} & 0\\0&0&0\end{pmatrix}
\ea
\end{lemma}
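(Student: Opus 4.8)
The plan is to treat \eqref{eqn:A} with $\sigma=0$ as a regular perturbation, in the two small parameters $\delta$ and $\eps$, of the constant-coefficient system obtained at $\delta=\eps=0$, and to read off each $\mathbf{a}^{(m,n)}(T)$ from the hierarchy of linear matrix ODEs produced by inserting \eqref{def:X;exp} together with the analogous expansion $\mathbf{A}(x;0,\delta,\eps)=\sum_{m+n\ge0}\mathbf{A}^{(m,n)}(x)\delta^m\eps^n$ into $\mathbf{X}_x=\mathbf{A}\mathbf{X}$, $\mathbf{X}(0)=\mathbf{I}$. Matching the coefficient of $\delta^m\eps^n$ gives
\be
(\mathbf{a}^{(m,n)})_x=\mathbf{A}^{(0,0)}\mathbf{a}^{(m,n)}+\sum_{\substack{m'+m''=m,\ n'+n''=n\\(m',n')\neq(0,0)}}\mathbf{A}^{(m',n')}\,\mathbf{a}^{(m'',n'')},\qquad \mathbf{a}^{(m,n)}(0)=0\ \text{for}\ (m,n)\neq(0,0),
\ee
an inhomogeneous linear system with the lower-order blocks as forcing, to be solved recursively by variation of parameters. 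By Lemma~\ref{lem:eps=0} the leading matrix $\mathbf{A}^{(0,0)}$ is that of $\mathbf{L}(0)$ restricted to $Y(0)$ in the basis $\{\boldsymbol{\phi}_1(0),\boldsymbol{\phi}_2(0),\boldsymbol{\phi}_3(0)\}$, namely the constant diagonal $\diag\{-i\kappa,i\kappa,0\}$; hence $\mathbf{a}^{(0,0)}(x)=e^{x\mathbf{A}^{(0,0)}}=\diag\{e^{-i\kappa x},e^{i\kappa x},1\}$, and $\mathbf{a}^{(0,0)}(T)=\mathbf{I}$ since $\kappa T=2\pi$.

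Assembling the coefficients $\mathbf{A}^{(m,n)}$ is the substance of the computation. Because $\mathbf{L}(\lambda)$ in \eqref{def:L} is quadratic in $\lambda$, writing $\mathbf{L}(\delta)=\mathbf{L}(0)+\delta\mathbf{L}_1+\delta^2\mathbf{L}_2$ and projecting with $\boldsymbol{\Pi}(0)$ from \eqref{def:Pi0} furnishes the purely-$\delta$ matrices $\mathbf{A}^{(1,0)}=[\boldsymbol{\Pi}(0)\mathbf{L}_1]\big|_{Y(0)}$ and $\mathbf{A}^{(2,0)}=\boldsymbol{\Pi}(0)\big[\mathbf{L}_2+\mathbf{L}_1\mathbf{w}^{(1,0)}\big]\big|_{Y(0)}$, where $\mathbf{w}^{(1,0)}$ is the order-$\delta$ reduction correction; these are all $x$-independent. (One checks directly that $\mathbf{A}^{(1,0)}$ is \emph{not} diagonal, e.g.\ $\langle\mathbf{L}_1\boldsymbol{\phi}_1(0),\boldsymbol{\psi}_2(0)\rangle=-c_0^{-1}$, which is why the off-diagonal entries feed back into the diagonal of $\mathbf{a}^{(2,0)}(T)$ and produce its imaginary parts.) The $\eps$-dependence enters only through the $O(\eps)$ operator $\mathbf{B}$, whose coefficients I would read off from the linearized system by inserting the Stokes expansion of Section~\ref{sec:Stokes}: the first-order profile $\phi_1=\sin(\kappa x)e^{\kappa y}$, $\eta_1=c_0^{-1}\cos(\kappa x)$ supplies only the modes $e^{\pm i\kappa x}$, while $\phi_2,\eta_2$ and the quadratic self-interactions of the first-order profile supply $e^{0}$ and $e^{\pm 2i\kappa x}$. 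This yields $\mathbf{A}^{(0,1)}=[\boldsymbol{\Pi}(0)\mathbf{B}_{0,1}]\big|_{Y(0)}$, the mixed matrix $\mathbf{A}^{(1,1)}$ combining the $\delta\eps$-coefficient of $\mathbf{B}$ with the $\mathbf{L}_1\mathbf{w}^{(0,1)}$ and $\mathbf{B}_{0,1}\mathbf{w}^{(1,0)}$ cross terms, and the $\eps^2$ matrix $\mathbf{A}^{(0,2)}=\boldsymbol{\Pi}(0)\big[\mathbf{B}_{0,2}+\mathbf{B}_{0,1}\mathbf{w}^{(0,1)}\big]\big|_{Y(0)}$, with $\mathbf{w}^{(0,1)}$ the order-$\eps$ reduction correction.

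The evaluation at $x=T$ is where the stated sparsity appears. Since $e^{T\mathbf{A}^{(0,0)}}=\mathbf{I}$, every variation-of-parameters integral telescopes to a Fourier mean: the $(j,j')$ entry of $\mathbf{a}^{(m,n)}(T)$ is a finite sum of integrals $\int_0^T(\cdots)_{jj'}(s)\,e^{s(\mu_{j'}-\mu_j)}\,ds$ with $(\mu_1,\mu_2,\mu_3)=(-i\kappa,i\kappa,0)$, that is, $T$ times the coefficient of the single harmonic $e^{s(\mu_j-\mu_{j'})}$ of the periodic integrand. A resonant (zero-frequency) diagonal contribution therefore produces the secular factor $T=2\pi/\kappa$ visible in $\mathbf{a}^{(1,0)}(T)$ and $\mathbf{a}^{(2,0)}(T)$, whereas an off-diagonal entry survives only when the forcing actually carries the matching mode. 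Mode-counting already kills most slots: the mean-free first-order profile produces no resonant coefficient, forcing $\mathbf{a}^{(0,1)}(T)=\mathbf{0}$; the $\delta\eps$ coupling lands only in the $(1,3),(2,3)$ positions; and the $\eps^2$ interaction populates exactly the $\boldsymbol{\phi}_1$--$\boldsymbol{\phi}_2$ block of $\mathbf{a}^{(0,2)}(T)$, whose off-diagonal $(1,2),(2,1)$ entries are the ones that later drive the bifurcation of Theorem~\ref{thm:BF}.

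The main obstacle is the bookkeeping through the reduction function at the quadratic orders: one must solve the inhomogeneous resolvent-type equations for $\mathbf{w}^{(1,0)}$ and $\mathbf{w}^{(0,1)}$ in the infinite-dimensional complement of $Y(0)$, then project $\mathbf{L}_1\mathbf{w}^{(1,0)}$ and $\mathbf{B}_{0,1}\mathbf{w}^{(0,1)}$ back and isolate the resonant harmonics. Throughout I must remain inside the three-dimensional reduced space spanned by $\boldsymbol{\phi}_1(0),\boldsymbol{\phi}_2(0),\boldsymbol{\phi}_3(0)$: as the Remark after Lemma~\ref{lem:eps=0} notes, the generalized eigenvector $\boldsymbol{\phi}_4(0)$ cannot be retained because its reduction correction would force a term growing like $y^2\notin L^2(-\infty,0)$. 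The only analytic subtlety beyond this power-counting is the apparent pole at $\kappa=1$ in the dual vectors $\boldsymbol{\psi}_{1,2}(0)$, which has to be confirmed removable so that the listed constants hold for every $\kappa>0$. Granting these, the five matrices follow by collecting the resonant coefficients.
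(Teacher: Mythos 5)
Your proposal is essentially the paper's own proof: the paper's entire argument is ``directly computation following the steps outlined in \cite{hur2021unstable_cap} section 3.6,'' and your perturbation hierarchy for the monodromy matrix (variation of parameters about $e^{x\,\diag\{-i\kappa,i\kappa,0\}}$, the reduction-function corrections $\mathbf{w}^{(1,0)},\mathbf{w}^{(0,1)}$, projection with $\boldsymbol{\Pi}(0)$, and extraction of resonant harmonics at $x=T$) is exactly that computation, with your sample entry $\langle\mathbf{L}_1\boldsymbol{\phi}_1(0),\boldsymbol{\psi}_2(0)\rangle=-c_0^{-1}$ checking out against \eqref{def:inner} and \eqref{def:Pi0}. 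One small over-claim: since $k_3(0)-k_{1,2}(0)=\pm\kappa$ matches the harmonics $e^{\pm i\kappa x}$ carried by the first-order Stokes profile, mode-counting alone only forces the diagonal and the $(1,2),(2,1)$ slots of $\mathbf{a}^{(0,1)}(T)$ to vanish, so the $(1,3),(3,1),(2,3),(3,2)$ entries must still be shown to be zero by evaluating the explicit integrals rather than by the profile being mean-free.
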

\begin{proof}
The proof is by directly computation following the steps outlined in \cite{hur2021unstable_cap} section 3.6.
\end{proof}
Let
\[
\text{$\delta\in\mathbb{C}$ and $|\lambda|\ll1$},\quad 
\text{$k=p\kappa+\gamma$, $p\in\mathbb{Z}$, $\gamma\in\mathbb{R}$ and $|\gamma|\ll1$},\quad
\text{$\eps\in\mathbb{R}$ and $|\eps|\ll1$}.
\]
Substituting \eqref{eqn:amn} \eqref{def:X;exp} into \eqref{def:Delta} yields
\ba \label{eqn:evans0}
\Delta(\delta,p\kappa+\gamma;\eps)=&\sum_{l=0}^3d^{(l,3-l,0)}\delta^{l}\gamma^{3-l}+\sum_{l=0}^3d^{(l,3-l,2)}\delta^{l}\gamma^{3-l}\eps^2+o((|\delta|+|\gamma|)^3+|\eps|^2),
\ea
as $\lambda,\gamma,\eps\to0$, where $d^{(\ell,m,n)}$, $\ell,m,n=0,1,2,\dots$ can be determined in terms of $a_{jk}^{(m,n)}(T)$, $j,k=1,2,3$ and $m,n=0,1,2,\dots$, where $T=2\pi/\kappa$ is the period of a Stokes wave. 
When $\eps=0$, by Corollary \ref{cor_dispersion}, $\Delta(i\sigma(k),k;0)=0$ for any $ k\in\mathbb{R}$, where $\sigma$ is in \eqref{eqn:sigma}. Particularly, $\Delta(\lambda_j(k_j(0),0),k_j(0);0)=0$, $j=1,2,3$, where $k_j(0)=(-1)^j\kappa$ for $j=1,2$ and $k_3(0)=0$.
In other words, $\lambda=0$ and $k=k_j(0)$, $j=1,2,3$, are the three roots of $\Delta(\cdot,\cdot\,;0)=0$. For $\gamma, \eps\in\mathbb{R}$ and $|\gamma|, |\eps|\ll1$, we are interested in determining $\lambda_j(k_j(0)+\gamma,\eps)$ such that 
\begin{equation}\label{eqn:lambda}
\lambda_j(k_j(0)+0,0)=0\quad\text{and}\quad 
\Delta(\lambda_j(k_j(0)+\gamma,\eps), k_j(0)+\gamma;\eps)=0.
\end{equation}
Let
\begin{equation}\label{def:lambda(gamma,eps)}
\lambda_j(k_j(0)+\gamma,\eps)=
\alpha^{(1,0)}_j\gamma+\alpha^{(1,1)}_j\gamma\eps
+o(|\gamma|+ |\gamma||\eps|),\quad j=1,2,\quad \text{as $\gamma,\eps\to0$.}
\end{equation}
We pause to remark \eqref{def:lambda(gamma,eps)} is not valid for $j=3$ for \eqref{eqn:sigma} is not analytic at $k=0$. However, the dispersion relation of Stoke waves in water of finite depth is analytic at $k=0$.
Substituting \eqref{def:lambda(gamma,eps)} into \eqref{eqn:evans0}, 
after straightforward calculations, we learn that $\gamma^3$ is the leading order whose coefficient reads
\begin{equation}\label{eqn:alpha10}
\begin{aligned}
\frac{8i\pi^3(c_0 + i\alpha^{(1,0)}_j)(2\alpha^{(1,0)}_j - ic_0)^2}{c_0^3\kappa^3}
\end{aligned}
\end{equation}
Solving the latter equation of \eqref{eqn:lambda} at the order of $\gamma^3$, \eqref{eqn:alpha10} must vanish, whence
\be\label{def:alpha10'}
\alpha^{(1,0)}_j=\frac{ic_0}{2}\quad \text{or}\quad\alpha^{(1,0)}_j=ic_0
\ee
On the other hand, \eqref{def:alpha10'} must agree with power series expansions of \eqref{def:sigma} about $\pm\kappa$. Thus 
\begin{equation}\label{def:alpha10}
\alpha^{(1,0)}_j=\frac{ic_0}{2} \quad\text{for}\quad j=1,2.
\end{equation}

Substituting \eqref{def:lambda(gamma,eps)} into \eqref{eqn:evans0} and evaluating at \eqref{def:alpha10}, after straightforward calculations, we verify that the $\gamma^2\eps^2$ term vanishes, and the coefficient of $\gamma^3\eps^2$ reads
\ba \label{eqn:f}
\frac{2i\pi^3(8(\alpha^{(1,1)}_j)^2 - \kappa^2)}{c_0^2\kappa^3}
\ea

\begin{proof}[proof of Theorem \ref{thm:BF}]
The coefficient of $\gamma^3\eps^2$ must vanish, whence
\begin{equation}\label{def:alpha11}
\alpha^{(1,1)}_j=\pm \frac{\kappa}{2\sqrt{2}}, \quad j=1,2.
\end{equation}
Hence, \eqref{def:lambda(gamma,eps)} bifurcate off the imaginary axis at $\eps k$ order for $j=1,2$. This completes the proof. 
\end{proof}
\section{Instability at non-zero resonant frequency}
Our previous analysis \cite{hur2021unstable,hur2021unstable_cap} for waves in water of finite depth shows instability can also occur at some non-zero resonant frequencies. 
\begin{definition}[Resonant frequency \cite{hur2021unstable_cap}]
Let $ik_j(\sigma)$ be eigenvalues of $\mathbf{L}(i\sigma)$ defined in Section \ref{sec:eps=0}. We call $(k_i(\sigma),k_j(\sigma),N)$ a pair of $N$-resonant eigenvalues of $\mathbf{L}(i\sigma)$ provided that $k_i(\sigma)-k_j(\sigma)=N\kappa$. And, we call the following set $\mathcal{R}(\sigma)$ the set of pairs of $N$-resonant eigenvalues of $\mathbf{L}(i\sigma)$.
\be
\mathcal{R}(\sigma):=\{(k_i(\sigma),k_j(\sigma),N):k_i(\sigma)-k_j(\sigma)=N\kappa, \;\text{for some order $N\in \mathbb{Z}^+$}\}.
\ee
If $\mathcal{R}(\sigma)\neq \emptyset$, we call $\lambda=i\sigma$ a resonant frequency of the wave, otherwise, we call $\lambda=i\sigma$ a non-resonant frequency.
\end{definition}
By the simplicity of \eqref{def:sigma}, we can study a pair of $N$-resonant eigenvalues explicitly. By symmetry, assume $k_i(\sigma)-k_j(\sigma)=N\kappa$ for some $\sigma>0$ and both $(k_i(\sigma),\sigma)$ and $(k_j(\sigma),\sigma)$ are on  $\sigma_+$-curve. See Figure \ref{fig:dispersion}. Then, we have 
$$k_i+\sqrt{\kappa|k_i|}=k_j+\sqrt{\kappa|k_j|},\quad k_i-k_j=N\kappa.$$
Clearly, $i=4$, $j=3$ or $j=1$, then we have $N\sqrt{\kappa}=\sqrt{-k_j}-\sqrt{k_i}<\sqrt{-k_j}$ implying $N<1$. Contradiction. Hence, $(k_i(\sigma),\sigma)$ and $(k_j(\sigma),\sigma)$ cannot both be on  $\sigma_+$-curve. Indeed, some further analysis shows the only possible resonance is between $k_2$ and $k_4$. Solving
$$\sigma=c_0(k_2-\sqrt{\kappa k_2})=c_0(k_4+\sqrt{\kappa k_4}),\quad k_2-k_4=N\kappa$$
yields
\be 
\label{resonance_k2k4}
\sigma=\frac{N^2-1}{4}\kappa c_0,\quad k_2=\frac{(N+1)^2}{4}\kappa,\quad k_4=\frac{(N-1)^2}{4}\kappa.
\ee 
Because $\frac{N^2-1}{4}\kappa c_0>\sigma_c$, $Y(\sigma)$ is two dimensional.
\begin{lemma}[\cite{hur2021unstable,hur2021unstable_cap}]\label{coefficients_highn1}
At the resonant frequency $\lambda=i\frac{N^2-1}{4}\kappa c_0$ where $k_2(\sigma)-k_4(\sigma)=N\kappa$, computations show
\ba 
\label{coeff_1}
\mathbf{a}^{(0,0)}(T)=&e^{ik_4T}\begin{pmatrix}1&0\\0&1\end{pmatrix},\quad 
\mathbf{a}^{(1,0)}(T)=\begin{pmatrix}\frac{2(-1)^{\tfrac{1}{2}(N + 1)^2}\pi(N + 1)}{Nc_0\kappa}&0\\0&\frac{2(-1)^{\tfrac{1}{2}(N - 1)^2}\pi(N - 1)}{Nc_0\kappa}\end{pmatrix},\\
\mathbf{a}^{(0,1)}(T)=&\begin{pmatrix}0&0\\0&0\end{pmatrix},
\ea
and 
\be \label{coeff_2}
\text {for $N=2$,}\quad 
\mathbf{a}^{(0,2)}(T)=\begin{pmatrix}-\frac{27\pi\kappa^2}{8c_0^2}&0\\0&\frac{\pi\kappa^2}{16c_0^2}\end{pmatrix},\quad \text {for $N\ge 3$,}\quad \mathbf{a}^{(0,2)}(T)=diag\{\mathbf{a}^{(0,2)}_{11},\mathbf{a}^{(0,2)}_{22}\},
\ee 
where $\mathbf{a}^{(0,2)}_{11},\mathbf{a}^{(0,2)}_{22}\in i(-1)^{\tfrac{1}{2}(N + 1)^2}\mathbb{R}$.
\end{lemma}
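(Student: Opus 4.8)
The plan is to obtain each $\mathbf{a}^{(m,n)}(T)$ as a Taylor coefficient of the fundamental solution $\mathbf{X}$ of the reduced $2\times2$ system \eqref{eqn:A}, in the sense of \eqref{def:X;exp}. I would first expand the reduced matrix as $\mathbf{A}(x;\sigma,\delta,\eps)=\sum_{m,n}\mathbf{A}^{(m,n)}(x)\delta^m\eps^n$. By Lemma~\ref{lem:eps=0}, in the ordered basis $\{\boldsymbol{\phi}_2(\sigma),\boldsymbol{\phi}_4(\sigma)\}$ of the two-dimensional space $Y(\sigma)$ the leading term is the diagonal matrix $\mathbf{A}^{(0,0)}=\diag\{ik_2,ik_4\}$, so $\mathbf{a}^{(0,0)}(x)=\Phi(x):=\diag\{e^{ik_2x},e^{ik_4x}\}$; the higher $\mathbf{A}^{(m,n)}(x)$ come from projecting the $O(\eps)$ operator $\mathbf{B}$ with $\boldsymbol{\Pi}(\sigma)$ (using the adjoint eigenfunctions \eqref{def:psi24}) and inserting the center-manifold reduction function $\mathbf{w}$. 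Because the only $\eps$-dependence is through the Stokes profile \eqref{def:stokes1}, \eqref{profile_higher}, each $\mathbf{A}^{(0,1)}(x)$ is a trigonometric polynomial carrying only the modes $e^{\pm i\kappa x}$ and each $\mathbf{A}^{(0,2)}(x)$ only the modes $e^{0},e^{\pm 2i\kappa x}$.

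Next I would match powers of $\delta,\eps$ in $\mathbf{X}_x=\mathbf{A}\mathbf{X}$, $\mathbf{X}(0)=\mathbf{I}$, giving for $(m,n)\neq(0,0)$ the linear system $\mathbf{a}^{(m,n)}_x=\mathbf{A}^{(0,0)}\mathbf{a}^{(m,n)}+\sum_{(p,q)\prec(m,n)}\mathbf{A}^{(p,q)}\mathbf{a}^{(m-p,n-q)}$ with $\mathbf{a}^{(m,n)}(0)=\mathbf{0}$, solved by $\mathbf{a}^{(m,n)}(x)=\Phi(x)\int_0^x\Phi(s)^{-1}\mathbf{F}^{(m,n)}(s)\,ds$ with $\mathbf{F}^{(m,n)}$ the forcing. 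Evaluation at $x=T=2\pi/\kappa$ rests on the single arithmetic fact $(k_2-k_4)T=N\kappa T=2\pi N\in2\pi\mathbb{Z}$, so $\Phi(T)=e^{ik_4T}\mathbf{I}$ is scalar (the two phases agree because they differ by $e^{2\pi iN}=1$), and the conjugation $\Phi(T)\Phi(s)^{-1}$ shifts an off-diagonal $(j,j')$ entry by the integer-times-$\kappa$ frequency $k_{j'}-k_j=\pm N\kappa$. Hence every oscillatory integral whose net frequency over a period is a nonzero multiple of $\kappa$ vanishes. This yields $\mathbf{a}^{(0,0)}(T)=e^{ik_4T}\mathbf{I}$ at once; it diagonalizes $\mathbf{a}^{(1,0)}(T)$, whose entries $e^{ik_jT}\,T\,\langle\mathbf{A}^{(1,0)}_{jj}\rangle$ are governed by the mean $\langle\mathbf{A}^{(1,0)}_{jj}\rangle=dk_j/d\sigma=(N\pm1)/(Nc_0)$; and it forces $\mathbf{a}^{(0,1)}(T)=\mathbf{0}$, since the diagonal $\eps$-forcing has zero mean (as $\phi_1\sim\sin\kappa x$ is odd and $\eta_1\sim\cos\kappa x$ even) while its off-diagonal frequencies $(\pm1\pm N)\kappa$ are nonzero for $N\geq2$.

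I expect $\mathbf{a}^{(0,2)}(T)$ to be the real work. Three ingredients enter $\mathbf{F}^{(0,2)}=\mathbf{A}^{(0,1)}\mathbf{a}^{(0,1)}+\mathbf{A}^{(0,2)}\mathbf{a}^{(0,0)}$: the quadratic forcing $\mathbf{A}^{(0,2)}$ assembled from the $\eps^2$ profile \eqref{profile_higher} together with the $O(\eps^2)$ part of $\mathbf{w}$; the \emph{full} $x$-dependent $\mathbf{a}^{(0,1)}(x)$, obtained by integrating the $\eps$-forcing against $\Phi$ but not yet evaluated at $T$; and the bookkeeping of which products of the available frequencies $\{0,\pm1,\pm2\}\kappa$ and the gap $\pm N\kappa$ conspire to a mean (frequency-$0$) integrand over $[0,T]$. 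For $N\geq3$ the gap $N\kappa$ strictly exceeds the largest harmonic $2\kappa$, so no off-diagonal resonance can occur and $\mathbf{a}^{(0,2)}(T)$ is automatically diagonal. The delicate case is $N=2$: then the spectral gap $k_2-k_4=2\kappa$ matches the second harmonic of the quadratic forcing, so one must check that the would-be secular off-diagonal contributions cancel and leave $\mathbf{a}^{(0,2)}(T)$ diagonal, with the explicit entries in \eqref{coeff_2}. This cancellation, together with the correct assembly of the reduction-function term in $\mathbf{A}^{(0,2)}$, is the main obstacle.

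Finally I would record the phase and reality structure. Since $e^{ik_2T}=i^{(N+1)^2}=(-1)^{(N+1)^2/2}$ and $e^{ik_4T}=(-1)^{(N-1)^2/2}=(-1)^{(N+1)^2/2}$ (they differ by $(-1)^{2N}=1$), these prefactors are exactly the signs appearing in \eqref{coeff_1}, \eqref{coeff_2}. The reality pattern of the eigenfunctions and adjoints in \eqref{def:phi1-4}, \eqref{def:psi24}---real first component, imaginary second and third---forces the surviving mean of $e^{-ik_jx}\mathbf{F}^{(0,2)}_{jj}(x)$ to be purely imaginary, giving $\mathbf{a}^{(0,2)}_{jj}\in i(-1)^{(N+1)^2/2}\mathbb{R}$ for $N\geq3$. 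This is precisely the phase class needed downstream: paired with $\mathbf{a}^{(1,0)}_{jj}\in(-1)^{(N+1)^2/2}\mathbb{R}$, it makes the bracket in \eqref{def:ind2} purely imaginary, so that its square---hence ${\rm ind}_2$---is $\leq0$. The two explicit numbers in \eqref{coeff_2} then reduce to evaluating a short list of elementary trigonometric integrals.
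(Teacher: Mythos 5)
Your proposal follows the same route as the paper, whose own proof of this lemma is only the sentence ``by direct computation following the steps outlined in [the reference]'': you expand the reduced monodromy matrix by variation of parameters about $\Phi(x)=\diag\{e^{ik_2x},e^{ik_4x}\}$ and let the frequency bookkeeping ($k_2-k_4=N\kappa$ against the harmonics $0,\pm\kappa,\pm2\kappa$ of the Stokes expansion) decide which entries survive, and the parts you actually verify check out --- e.g.\ $e^{ik_2T}=e^{ik_4T}=i^{(N\pm1)^2}$ and $\mathbf{a}^{(1,0)}_{jj}(T)=e^{ik_jT}\,T\,dk_j/d\sigma$ with $dk_2/d\sigma=(N+1)/(Nc_0)$, $dk_4/d\sigma=(N-1)/(Nc_0)$ reproduce \eqref{coeff_1} exactly, and the non-resonance argument gives $\mathbf{a}^{(0,1)}(T)=\mathbf{0}$ and diagonality of $\mathbf{a}^{(0,2)}(T)$ for $N\geq3$. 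The one substantive item you flag but do not execute --- the cancellation of the resonant off-diagonal contributions to $\mathbf{a}^{(0,2)}(T)$ when $N=2$ (where the gap $2\kappa$ coincides with the second harmonic) and the two explicit diagonal values in \eqref{coeff_2} --- is exactly the content the paper also defers to the cited computation, so your write-up is at the same level of completeness as the paper's own proof; to make it self-contained you would still need to assemble $\mathbf{A}^{(0,2)}$ including the reduction-function term and evaluate the resulting trigonometric integrals.
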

\begin{proof}
The proof is by directly computation following the steps outlined in \cite{hur2021unstable_cap} section 3.6.
\end{proof}
\begin{remark}
For $N=2$, different from the case of finite depth, the off-diagonal entries of $\mathbf{a}^{(0,2)}(T)$ vanish, suggesting there should be no instability at the resonant frequency of order $2$.
\end{remark}
Let
\[
\text{$\lambda=i\sigma+\delta$, $\delta\in\mathbb{C}$ and $|\delta|\ll1$},\quad
\text{$k=k_j(\sigma)+p\kappa+\gamma$, $j=2,4$, $p\in\mathbb{Z}$, $\gamma\in\mathbb{R}$ and $|\gamma|\ll1$},
\]
$\eps\in\mathbb{R}$ and $|\eps|\ll1$, and we recall the result of Lemma~\ref{coefficients_highn1} to arrive at 
\ba  \label{expanddelta2}
&\Delta(i\sigma+\delta,k_j(\sigma)+p\kappa+\gamma;\eps)\\
=&a_{11}^{(1,0)}a_{22}^{(1,0)}\delta^2-T^2e^{2ik_4(\sigma)T}\gamma^2+\det(\mathbf{a}^{(0,2)}(T))\eps^4-iTe^{ik_4(\sigma)T}(a_{11}^{(1,0)}+a_{22}^{(1,0)})\delta\gamma\\
&+(a_{11}^{(0,2)}a_{22}^{(1,0)}+a_{22}^{(0,2)}a_{11}^{(1,0)})\delta\eps^2
-iTe^{ik_4(\sigma)T}(a_{11}^{(0,2)}+a_{22}^{(0,2)})\gamma\eps^2\\
&-iTe^{ik_4(\sigma)T}(a_{11}^{(1,1)}+a_{22}^{(1,1)})\delta \gamma \eps+o(|\delta|^2+|\gamma|^2+|\eps|^4+|\delta \gamma|+|\delta| |\eps|^2+|\gamma||\eps|^2+|\delta\gamma||\eps|)
\ea 
as $\delta,\gamma,\eps\to0$. We seek $\lambda_{2j}(k_{2j}(\sigma)+\gamma,\eps)$, $j=1,2$, such that 
\[
\lambda_{2j}(k_{2j}(\sigma)+0,0)=i\sigma\quad\text{and}\quad 
\Delta(\lambda_{2j}(k_{2j}(\sigma)+\gamma,\eps),k_{2j}(\sigma)+\gamma;\eps)=0
\]
Let
\be \label{def:lambda(gamma,eps)H}
\lambda_{2j}(k_{2j}(\sigma)+\gamma,\eps)=i\sigma+
\alpha^{(1,0)}_{2j}\gamma+\alpha^{(0,2)}_{2j}\eps^2+o(|\gamma|+|\eps|^2),\quad j=1,2, \quad\text{as $\gamma, \eps\to0$,}
\ee     
where $\alpha^{(1,0)}_{2j}$ and $\alpha^{(0,2)}_{2j}$, $j=1,2$, are to be determined in terms of $a_{jk}^{(m,n)}$. Indeed, similar to \cite{hur2021unstable} (), $\alpha^{(0,2)}_{2j}$ shall solve
\begin{equation}\label{eqn:alpha02H}
(a_{11}^{(1,0)}a_{22}^{(1,0)})(\alpha^{(0,2)}_{2j})^2
+(a_{11}^{(0,2)}a_{22}^{(1,0)}+a_{11}^{(1,0)}a_{22}^{(0,2)})\alpha^{(0,2)}_{2j}
+a_{11}^{(0,2)}a_{22}^{(0,2)}=0.
\end{equation}
\begin{proof}[proof of Theorem \ref{thm:unstable2}]
Solving for $\alpha^{(0,2)}_{2j}$ in \eqref{eqn:alpha02H} yields the definition of $\rm{ind}_2$. By the formulas \eqref{coeff_1} and \eqref{coeff_2}, we find $$a_{11}^{(1,0)},a_{22}^{(1,0)}\in (-1)^{\tfrac{1}{2}(N+1)^2}\mathbb{R}\quad\text{and}\quad a_{11}^{(0,1)},a_{22}^{(0,1)}\in i(-1)^{\tfrac{1}{2}(N+1)^2}\mathbb{R}.$$
Therefore, $(a^{(0,1)}_{11}a^{(1,0)}_{22} - a^{(1,0)}_{11}a^{(0,1)}_{22})(a_{11}^{(1,0)}a_{22}^{(1,0)})^{-1}$ is purely imaginary yielding ${\rm ind}_{2}<0$.
\end{proof}

\bibliographystyle{amsplain}
\bibliography{wwbib}

\end{document}